\date{12 August 2014}
\title[SIR epidemic on a random graph with given degrees]
{Law of large numbers for the SIR epidemic on a random graph with given degrees}
\author{Svante Janson, Malwina Luczak, Peter Windridge}
\newcommand\urladdrx[1]{{\urladdr{\def~{{\tiny$\sim$}}#1}}}
\address{Department of Mathematics, Uppsala University, PO Box 480,
SE-751~06 Uppsala, Sweden}
\email{svante.janson@math.uu.se}
\address{School of Mathematical Sciences, Queen Mary University of London, Mile End Road, London, E1 4NS, UK.}
\email{m.luczak@qmul.ac.uk}
\address{School of Mathematical Sciences, Queen Mary University of London, Mile End Road, London, E1 4NS, UK.}
\email{p.windridge@qmul.ac.uk}
\keywords{SIR epidemic process, random graph with given degree sequence,
  configuration model}
\subjclass[2010]{05C80, 60F99, 60J28, 92D30}
\numberwithin{equation}{section}
\renewcommand\le{\leqslant}
\renewcommand\ge{\geqslant}
\newtheorem{theorem}{Theorem}[section]
\newtheorem{lemma}[theorem]{Lemma}
\newtheorem{corollary}[theorem]{Corollary}
\theoremstyle{definition}
\newtheorem{example}[theorem]{Example}
\newtheorem{remark}[theorem]{Remark}
\newtheorem*{acks}{Acknowledgements}
\theoremstyle{remark}
\newenvironment{romenumerate}[1][0pt]{
\addtolength{\leftmargini}{#1}\begin{enumerate}
 \renewcommand{\labelenumi}{\textup{(\roman{enumi})}}%
 \renewcommand{\theenumi}{\textup{(\roman{enumi})}}%
 }{\end{enumerate}}
\newcounter{oldenumi}
{\setcounter{oldenumi}{\value{enumi}}
\begin{romenumerate} \setcounter{enumi}{\value{oldenumi}}}
{\end{romenumerate}}
\newcounter{thmenumerate}
\newcounter{xenumerate}   
\newcommand{\refT}[1]{Theorem~\ref{#1}}
\newcommand{\refL}[1]{Lemma~\ref{#1}}
\newcommand{\refR}[1]{Remark~\ref{#1}}
\newcommand{\refS}[1]{Section~\ref{#1}}
\newcommand{\refSS}[1]{Section~\ref{#1}}
\newcommand{\refSSS}[1]{Section~\ref{#1}}
\newcommand{\refE}[1]{Example~\ref{#1}}
\newcommand{\refApp}[1]{Appendix~\ref{#1}}
\newcommand\marginal[1]{\marginpar{\raggedright\parindent=0pt\tiny #1}}
\newcommand\REM[1]{{\raggedright\texttt{[#1]}\par\marginal{XXX}}}
\xdef\klockan{\the\count1.0\the\count255}
\xdef\klockan{\the\count1.\the\count255}\fi
\newcommand{\sumki}{\sum_{k=1}^\infty}
\newcommand\set[1]{\ensuremath{\{#1\}}}
\newcommand\xpar[1]{(#1)}
\newcommand\bigpar[1]{\bigl(#1\bigr)}
\newcommand\Bigpar[1]{\Bigl(#1\Bigr)}
\newcommand\lrpar[1]{\left(#1\right)}
\newcommand\abs[1]{|#1|}
\newcommand\bigabs[1]{\bigl|#1\bigr|}
\newcommand\Bigabs[1]{\Bigl|#1\Bigr|}
\newcommand\biggabs[1]{\biggl|#1\biggr|}
\newcommand\lrabs[1]{\left|#1\right|}
\def\rompar(#1){\textup(#1\textup)}    
\newcommand\parfrac[2]{\lrpar{\frac{#1}{#2}}}
\def\xexp(#1){e^{#1}}
\newcommand\ntoo{\ensuremath{{n\to\infty}}}
\newcommand\ktoo{\ensuremath{{k\to\infty}}}
\newcommand\ttoo{\ensuremath{{t\to\infty}}}
\newcommand\downto{\searrow}
\newcommand\upto{\nearrow}
\newcommand\punkt{.\spacefactor=1000}    
\newcommand\iid{i.i.d\punkt}
\newcommand\ie{i.e\punkt}
\newcommand\eg{e.g\punkt}
\newcommand\cf{cf\punkt}
\newcommand{\as}{a.s\punkt}
\newcommand\whp{w.h.p\punkt}
\newcommand\whpx{w.h.p}
\newcommand{\tend}{\longrightarrow}
\newcommand\dto{\overset{\mathrm{d}}{\tend}}
\newcommand\pto{\overset{\mathrm{p}}{\tend}}
\newcommand\op{o_{\mathrm p}}
\newcommand\Op{O_{\mathrm p}}
\newcounter{CC}
\newcounter{cc}
\newcommand\E{\operatorname{\mathbb E{}}}
\renewcommand\P{\operatorname{\mathbb P{}}}
\newcommand\Exp{\operatorname{Exp}}
\newcommand\ga{\alpha}
\newcommand\gb{\beta}
\newcommand\gd{\delta}
\newcommand\gD{\Delta}
\newcommand\gG{\Gamma}
\newcommand\eps{\varepsilon}
\newcommand\cA{\mathcal A}
\newcommand\cE{\mathcal E}
\newcommand\cL{{\mathcal L}}
\newcommand\qq{^{1/2}}
\renewcommand{\=}{:=}
\newcommand\intoi{\int_0^1}
\newcommand\intoo{\int_0^\infty}
\newcommand\oi{[0,1]}
\newcommand\dd{\,d}
\newcommand{\pgf}{probability generating function}
\newcommand\rhs{right-hand side}
\numberwithin{equation}{section}
\newcommand{\Prob}{\mathbb{P}}
\newcommand{\N}{\mathbb{N}}
\newcommand{\R}{\mathbb{R}}
\newcommand{\iEndtt}{\tau^*}
\newcommand{\fpT}{\hat \tau_{\infty}} 
\newcommand{\iUp}{T_0} 
\newcommand{\gS}{g} 
\newcommand{\fXS}{h_\mathrm{S}}
\newcommand{\fXI}{h_\mathrm{I}}
\newcommand{\fXR}{h_\mathrm{R}}
\newcommand{\fXX}{h_X}
\newcommand{\fSv}{v_\mathrm{S}}
\newcommand{\pS}{\mathrm p_{\mathrm{S}}}
\newcommand{\pI}{\mathrm p_{\mathrm{I}}}
\newcommand{\aS}{\alpha_{\mathrm{S}}}
\newcommand{\aI}{\alpha_{\mathrm{I}}}
\newcommand{\aR}{\alpha_{\mathrm{R}}}
\newcommand\nS{n_{\mathrm{S}}}
\newcommand\nI{n_{\mathrm{I}}}
\newcommand\nR{n_{\mathrm{R}}}
\newcommand{\nk}{n_{k}}
\newcommand\nSk{n_{\mathrm{S},k}}
\newcommand\nIk{n_{\mathrm{I},k}}
\newcommand\nRk{n_{\mathrm{R},k}}
\newcommand\nIx[1]{n_{\mathrm{I},#1}}
\newcommand{\mS}{\mu_\mathrm{S}}
\newcommand{\mI}{\mu_\mathrm{I}}
\newcommand{\mR}{\mu_\mathrm{R}}
\newcommand{\mX}{\mu}
\newcommand{\vaS}{\tilde\alpha_{\mathrm{S}}}
\newcommand{\vaI}{\tilde\alpha_{\mathrm{I}}}
\newcommand{\vaR}{\tilde\alpha_{\mathrm{R}}}
\newcommand\vnS{\tilde n_{\mathrm{S}}}
\newcommand{\vnk}{\tilde n_{k}}
\newcommand{\vnSk}{\tilde n_{\mathrm{S},k}}
\newcommand{\vnIk}{\tilde n_{\mathrm{I},k}}
\newcommand{\vnRk}{\tilde n_{\mathrm{R},k}}
\newcommand{\vmS}{\tilde \mu_\mathrm{S}}
\newcommand{\vmI}{\tilde \mu_\mathrm{I}}
\newcommand{\vmR}{\tilde \mu_\mathrm{R}}
\newcommand{\vmX}{\tilde \mu}
\newcommand{\X}[1]{X_{#1}}
\newcommand{\XS}[1]{X_{\mathrm{S},#1}}
\newcommand{\XI}[1]{X_{\mathrm{I},#1}}
\newcommand{\XR}[1]{X_{\mathrm{R},#1}}
\newcommand{\Sv}[1]{S_{#1}}
\newcommand{\Iv}[1]{I_{#1}}
\newcommand{\Rv}[1]{R_{#1}}
\newcommand{\lIv}[1]{\hat I_{#1}}
\newcommand{\lRv}[1]{\hat R_{#1}}
\newcommand{\lTP}[1]{\theta_{#1}}
\newcommand{\Xtt}[1]{X'_{#1}}
\newcommand{\XStt}[1]{X'_{\mathrm{S},#1}}
\newcommand{\XItt}[1]{X'_{\mathrm{I},#1}}
\newcommand{\XRtt}[1]{X'_{\mathrm{R},#1}}
\newcommand{\Svtt}[1]{S'_{#1}}
\newcommand{\Svktt}[1]{\Svtt{#1}(k)}
\newcommand{\Ivtt}[1]{I'_{#1}}
\newcommand{\Rvtt}[1]{R'_{#1}}
\newcommand{\Rzero}{\mathfrak{R}_0}
\newcommand{\tRzero}{\widetilde{\mathfrak{R}_0}}
\newcommand{\lttInv}[1]{\hat A_{#1}}
\newcommand{\ltt}{\hat \tau}
\newcommand{\xtau}{\bar \tau}
\newcommand{\fXIroot}{\lTP{\infty}}
\newcommand{\calibS}{s_0}
\newcommand\taz{\hat \tau_0}
\newcommand\xnit{\XI{t}}
\newcommand\xnio{\XI{0}}
\newcommand\xnt{\X{t}}
\newcommand\xno{\X{0}}
\newcommand\znt{Z_{t}}
\newcommand\nn{^{(n)}}
\newcommand\nsk{\nSk}
\newcommand\gas{\aS}
\newcommand\Gr{G}
\newcommand\GrCM{G^*}
\newcommand\Grxx{\bar G^*}
\newcommand\xI{x_{\mathrm{I}}}
\newcommand\tth{\tilde\theta}
\newcommand\FF{{}_2F_1}
\newcommand\ypgf[1]{g_{#1}}
\newcommand\hZ{W}
\newcommand\GWp{Galton--Watson process}
\newcommand\rw{\hat Z}
\newcommand\WW{\widehat W}
\newcommand\Wa{W_1}
\newcommand\Wb{W_2}
\newcommand\cAa{\cA_1}
\newcommand\cAb{\cA_2}
\newcommand\cAloop{\cA'}
\newcommand\cApair{\cA''}
\newcommand\QQ{\varkappa}
\begin{document}

\begin{abstract}
We study the susceptible-infective-recovered (SIR) epidemic on a random graph
chosen uniformly subject to having given vertex degrees.  In this model infective vertices
infect each of their susceptible neighbours, and recover, at a
constant rate.

Suppose that initially there are only a few infective vertices.
We prove there is a threshold for a parameter involving the rates and
vertex degrees below which only a small number of infections occur.  Above the
threshold a large outbreak occurs with probability bounded away from zero.  Our main result is that,
conditional on a large outbreak, the evolutions of certain quantities of interest,
such as the fraction of infective vertices, converge to deterministic
functions of time.

We also consider more general initial conditions for the epidemic, and derive criteria for
a simple vaccination strategy to be successful.

In contrast to earlier results for this model, our approach only requires basic regularity conditions and a
uniformly bounded second moment of the degree of a random vertex.

En route, we prove analogous results for the epidemic on the configuration model multigraph
under much weaker conditions.  Essentially, our main result requires only that the initial
values for our processes converge, i.e.\ it is the best possible.
\end{abstract}

\maketitle

\section{Introduction}
\label{s:intro}

The Markovian SIR process is a simple model for a disease spreading around a finite population
in which each individual is either susceptible, infective or recovered.
Individuals are represented by vertices in a graph $\Gr$ with edges corresponding to potentially infectious
contacts.  Infective vertices become recovered at rate $\rho \ge 0$ and infect each neighbour at rate $\beta > 0$;
those are the only possible transitions, \ie{} recovered vertices never become infective.

The applicability, behaviour and tractability of the model depends heavily on how $\Gr$ is chosen.
In classical formulations $\Gr$ is the complete graph (see \cite{daleygani} for a historical account)
but gradually attention has shifted towards more realistic models where
individuals may vary in how many contacts they have.

Particular interest has focused on the case that $\Gr$ itself is random.  Several families
of random graph have been considered, such as Erd\H{o}s--R\'enyi $G(n,p)$ graphs \cite{neal2003sir},
those with local household structure \cite{Ball201053} and other forms of clustering \cite{britton2008epidemics}, see the recent survey \cite{HouseSurvey}.

The present paper concerns SIR epidemics on random graphs with a given degree sequence.  These random graphs are commonly used to model the internet, scientific collaboration networks and sexual contact networks \cite{NewmanStrogatzWatts,Newman_2002,Ball200869} (and the references therein).
Random graphs with given degree sequence are normally constructed via the configuration model,
introduced by Bollob\'as, see~\cite{bollobas}.
In recent years, various properties of these graphs have been studied, such as the appearance and size of a giant component~\cite{molloy1995critical, molloy1998size,JansonLuczakGiantCpt}, as well as the near-critical behaviour~\cite{KangSeierstad, JosephCritical}.  Other quantities investigated include the size of the $k$-core~\cite{JansonLuczakkCore, janson2008asymptotic, Sato}, diameter~\cite{FernholzRamachandran}, chromatic number~\cite{frieze2007chromatic} and matching number~\cite{BohmanFrieze}.

There have been a number of studies of SIR epidemics on random graphs with a given degree sequence.
A set of non-linear ordinary differential equations summarising the time evolution of the epidemic were obtained heuristically by Volz~\cite{Volz_2007}.
Another non-rigorous derivation of these equations is given in~\cite{Miller}.

Decreusefond et al.~\cite{DDMT12} 
study a measure-valued process describing the degrees of susceptible
individuals and the number of edges between different types of
vertices. They prove that, as the population size grows to infinity, the
measure-valued process converges to a deterministic limit, from which the
Volz equations may be derived as a corollary.  The results in~\cite{DDMT12}
are proven under the conditions that the fifth moment of the degree of a
random vertex is uniformly bounded, and that, asymptotically,
the proportion of vertices infective at time zero is positive.

Bohman and Picollelli~\cite{BohmanPicollelli} study the SIR process dynamics on the configuration model with bounded vertex degrees, starting from a single infective. They use a multitype branching process approximation
for both the early and final stages of the epidemic.  The middle phase of the epidemic, while there are at least a moderate number of infectives, is analysed using Wormald's differential
equations method.

Barbour and Reinert \cite{BarbourReinert} use multitype branching process approximations to prove results approximating the entire course of an SIR epidemic within a more general non-Markovian framework, allowing degree dependent infection and recovery time distributions. A result for  graphs with a given degree sequence with bounded vertex degrees follows as a corollary.

See also~\cite{CD-SIS} for the SIS epidemic process on a random graph with given degrees, which exhibits very different behaviour compared to the SIR epidemic studied here.

\textbf{Our contribution.}
In this paper we analyse the SIR epidemic on graphs with a given degree sequence for an arbitrary number of initially infective vertices, assuming only basic regularity conditions and uniform boundedness of the second moment of the degree distribution.  This contrasts with the earlier works mentioned above, which require either uniform boundedness of the fifth moment \cite{DDMT12}, or uniformly bounded degrees \cite{BohmanPicollelli,BarbourReinert}.
In our proof we study the configuration model epidemic under the weaker condition that
the degree of a randomly chosen susceptible vertex is uniformly integrable.  This is the best possible condition for a `law of large numbers' result in the spirit of \cite{Volz_2007,Miller,DDMT12,BohmanPicollelli}, since it amounts to convergence
of the average number of susceptible contacts at the epidemic's epoch; see \refR{r:UI}.
Our approach extends techniques of~\cite{JansonLuczakkCore,JansonLuczakGiantCpt} and leads to fairly simple proofs.

\smallskip

The rest of the paper is laid out as follows.  In \refS{s:notationresults}, we define the model and notation, and state our assumptions and results.
In \refS{s:fastepidemic} we consider a time-changed version of the epidemic, as a tool to be used in our proofs.
In \refS{s:proofs}, we prove our results for multigraphs with a given degree
sequence defined by the configuration model. In \refS{s:branch}, we study
more carefully the probability of a large outbreak and the size of a small
outbreak, obtaining more detailed forms of statements in
Theorem \ref{t:mI0}\ref{t:mI0.i} and~\ref{t:mI0.ii.c}.
In \refS{s:simple}, we transfer the results from multigraphs to simple
graphs with a given degree sequence. In \refS{ss:nosecondmoment} we discuss
briefly what happens when the second moment of the degree of a random vertex
is not uniformly bounded.
\refS{s:time-shift} contains a few remarks on the
random time shift used in our proof.
\refApp{Atimechange} contains a technical lemma on the time change in
\refS{s:fastepidemic}.
\refApp{Anotation} is a summary of the main notation used in the paper.

\medskip

\begin{acks}
The research of M.L. and P.W. was supported by EPSRC grant EP/J004022/2.
The research of S.J. was partly supported by the Knut and Alice Wallenberg Foundation.
S.J. thanks Tom Britton  and
P.W. thanks Thomas House, Pieter Trapman and Viet Chi Tran for useful comments.
This research was initiated following discussions at the ICMS Workshop on
`Networks: stochastic models for populations and epidemics' in Edinburgh 2011.
\end{acks}

\section{Model, notation, assumptions and results}\label{s:notationresults}

For $n \in \N$ and a sequence $(d_i)_1^n$ of non-negative integers,
let $\Gr = \Gr(n, (d_i)_1^n )$ be a simple graph (\ie{} with no loops or double edges) on $n$ vertices, chosen uniformly at random from among all graphs with degree sequence $(d_i)_1^n$.
(We tacitly assume that there is some such graph, so $\sum_{i = 1}^n d_i$ must be even, at least.)

Given the graph $\Gr$, the SIR epidemic evolves as a continuous-time Markov chain.  At any
time, each vertex is either susceptible, infected or recovered.  Each infective vertex recovers at rate $\rho \ge 0$ and also infects each susceptible neighbour at rate $\beta > 0$.

We assume that there are initially $\nS$, $\nI$, and $\nR$ susceptible, infective and recovered vertices, respectively.
Further, we assume that, for each $k \ge 0$, there are respectively $\nSk$, $\nIk$ and $\nRk$ of these vertices with degree $k$.
Thus, $\nS + \nI + \nR = n$ and $\nS = \sum_{k=0}^\infty \nSk$, $\nI = \sum_{k=0}^\infty \nIk$, $\nR = \sum_{k=0}^\infty \nRk$.
We write $n_k$ to denote the total number of vertices with degree $k$; thus, for each $k$,  $n_k =  \nSk + \nIk + \nRk$.
Note that all these parameters, as well as the sequence $(d_i)_1^n$, depend on the number
$n$ of vertices, although we omit explicit mention of this in the notation.  For technical reasons, note that they do not have to be defined for all integers $n$; a subsequence is enough.

We consider asymptotics as $n \to \infty$, and all unspecified limits below are as $n \to \infty$. Throughout the paper we use the notation $\op$ in a standard way.  That is, for a sequence of
random variables $(Y\nn)_1^\infty$ and real numbers
$(a_n)_1^\infty$, `$Y\nn = o_p(a_n)$'  means $Y\nn/a_n \pto 0$.
Similarly, $Y\nn=\Op(1)$ means that for every $\eps>0$ there exists $K_\eps$
such that $\P(|Y\nn|>K_\eps)<\eps$ for all $n$.
For a sequence $(Y\nn_t)_1^\infty$ of real-valued stochastic processes defined on a subset $E$ of $\R$ and a real-valued function $y$ on $E$,
`$Y\nn_t \pto y(t)$ uniformly on $E' \subseteq E$' means $\sup_{t \in E'}\abs{Y\nn_t - y(t)}
\pto 0$.
Given a sequence of events $(\cE_n)_1^\infty$, event $\cE_n$ is said to hold \whp{} (with high probability) if the probability of $\cE_n$ converges to 1.

We assume the following regularity conditions for the degree sequence asymptotics.

\begin{enumerate}
 \renewcommand{\theenumi}{(D\arabic{enumi})}
 \renewcommand{\labelenumi}{\theenumi}

 \item \label{d:alphaprops} \label{d:first}
The fractions
of initially susceptible, infective and recovered vertices converge to some $\aS, \aI,\aR \in [0,1]$, i.e.
\begin{equation}\label{e:alphaprops}
\nS/n \to \aS, \qquad \nI/n \to \aI, \qquad \nR/n \to \aR.
\end{equation}
Further, $\aS > 0$.

\item \label{d:asympsuscdist}  The degree of a randomly chosen susceptible vertex converges to a probability distribution $(p_k)_{0}^\infty$, i.e.
\begin{equation}\label{e:nSktopk}
\nSk/\nS \to p_k, \qquad k \ge 0.
\end{equation}
Further, this limiting distribution has a finite and positive mean
\begin{equation}\label{e:meansuscdist}
\lambda \= \sum_{k = 0}^\infty k p_k \in (0, \infty).
\end{equation}

\item \label{d:suscmeanUS}
The average degree of a randomly chosen susceptible vertex converges to $\lambda$, i.e.
\begin{equation}
 \label{e:suscmeanUS}
 \sum_{k=0}^{\infty} k \nSk/\nS \to \lambda.
\end{equation}

\item
\label{d:meanIRconv} The average degree over all vertices converges to $\mX > 0$, i.e.
\begin{equation}\label{e:meanX0}
 \sum_{k = 0}^\infty k n_k/n =  \sum_{i = 1}^n d_i/n \to \mX,
\end{equation}
and, in more detail, for some $\mS, \mI, \mR$,
\begin{equation}\label{e:meanSs}
\sum_{k = 0}^\infty k \nSk/n \to \mS,
\end{equation}
\begin{align}\label{e:meanXIR}
  \sum_{k = 0}^\infty k \nIk/n \to \mI,
&\qquad  
  \sum_{k = 0}^\infty k \nRk/n \to \mR.
\end{align}

\item \label{d:SI2ndmoment}
The maximum degree of the initially infective vertices is not too large:
\begin{equation}\label{e:SI2ndmomentx}
\max\set{k:\nIk>0}=o(n).
\end{equation}

\item \label{d:p1orrhopos} Either $p_1 > 0$ or $\rho > 0$ or $\mR > 0$.

\label{d:last}
\end{enumerate}

\begin{remark}
Obviously, $\aS+\aI+\aR=1$ and $\mS+\mI+\mR=\mX$.  Further, assumptions \ref{d:alphaprops}--\ref{d:suscmeanUS} imply $\sum_{k = 0}^\infty k \nSk/n \to \aS \lambda$. Thus,
$\mS = \aS \lambda$ and \eqref{e:meanSs} is redundant.  

The assumptions $\aS > 0$ in \ref{d:alphaprops} and $\lambda  > 0$ in \ref{d:asympsuscdist} mean that there are initially a
significant number of susceptibles with non-zero degree.  They are included to avoid trivialities, and, in particular,  imply that $\nS \ge 1$ for large enough $n$.
\end{remark}

\begin{remark}\label{r:UI}
Assumptions \ref{d:alphaprops}--\ref{d:suscmeanUS}
together imply that $\sum_{k = 0}^\infty k \nSk/n$ is
uniformly summable, \ie{} for any $\eps > 0$ there exists $K$ such that $\sum_{k = K+1}^\infty k \nSk / n < \eps$ for $n$ large enough.
Conversely,
\ref{d:alphaprops},
\ref{d:asympsuscdist}  and uniform summability
of $\sum_{k = 0}^\infty k \nSk/n$
imply \ref{d:suscmeanUS}.
\end{remark}

\begin{remark}
In particular, the uniform summability in \refR{r:UI} implies that
$\max\set{k:\nSk>0}=o(n)$.
This and
assumption \ref{d:SI2ndmoment} imply, using \eqref{e:meanX0},
\begin{equation}\label{e:SI2ndmoment}
\sum_{k = 0}^\infty k^2 (\nSk + \nIk) \le
o(n)\sum_{k = 0}^\infty k \nk =
o(n^2).
\end{equation}
Conversely, \eqref{e:SI2ndmoment} implies \eqref{e:SI2ndmomentx}.
We do not need the corresponding condition for initially recovered vertices,
but since these only play a passive role, it would be essentially no loss of
generality to assume that the maximum degree of all vertices
$\max_i d_i = \max\set{k:n_k>0}=o(n)$.
\end{remark}

It will be convenient for us to work with multigraphs, that is to allow
loops and multiple edges. Let $G^* (n, (d_i)_1^n )$ be the
random multigraph with given degree sequence $(d_i)_1^n$ defined by the
configuration model: we take a set of $d_i$ half-edges for each vertex $i$
and combine half-edges into edges by a uniformly random matching (see \eg{}
\cite{bollobas}). Conditioned on the multigraph being simple, we obtain  $G
= G (n, (d_i)_1^n )$, the uniformly distributed random graph with degree
sequence $(d_i)_1^n$.

The configuration model 
has been used in the study of epidemics in a number of earlier works, see, for example,~\cite{andersson1998limit,Ball200869,BrittonJansonMartinLof,DDMT12,BohmanPicollelli}.

We prove our results for the SIR epidemic on $\GrCM$, and, by conditioning on $\GrCM$ being simple, we deduce that these results also hold for the SIR epidemic on $\Gr$ .
Our argument relies on the probability that
$\GrCM$ is simple
being bounded away from zero as  $n \to \infty$.
By the main theorem of \cite{Janson:2009:PRM:1520305.1520316} this occurs provided the following condition holds.
\begin{enumerate}
 \renewcommand{\theenumi}{(G\arabic{enumi})}
 \renewcommand{\labelenumi}{\theenumi}
\item\label{d:sumsquaredi=On}
The degree of a randomly chosen vertex has a bounded second moment, i.e.
\begin{equation}\label{e:sumsquaredi=On}
\sum_{k = 0}^\infty k^2 \nk = O(n). 
\end{equation}
\end{enumerate}

\begin{remark}\label{r:2ndmoment}
Assumption \ref{d:sumsquaredi=On} implies that the distribution $(p_k)_{0}^\infty$ has a finite second moment,
\ie{} $\sum_{k = 0}^\infty k^2 p_k < \infty$.
Note also that \ref{d:sumsquaredi=On} implies
\eqref{e:SI2ndmoment}
and thus \ref{d:SI2ndmoment}.
\end{remark}

\begin{remark}\label{r:simplegraphwithoutG1}
  Although we use \ref{d:sumsquaredi=On} in order to draw conclusions for
  the simple graph $G$, we suspect that the results hold even without it.
Bollob\'as and Riordan \cite{BRsimple} have recently shown results for a
related problem (the size of the giant component in $G$) from the multigraph
case without using \ref{d:sumsquaredi=On}; they show that even if the
probability that the multigraph is simple is almost exponentially small, the
error probabilities in their case are even smaller. We have not attempted
doing anything similar here.
\end{remark}

We study the SIR epidemic on the multigraph $G^*$, revealing its edges dynamically while the epidemic spreads. To be precise, we
call a half-edge free if it is not yet paired to another half-edge.
We start with $d_i$ half-edges attached to vertex $i$ and all half-edges free.
We call a half-edge susceptible,
infective or recovered according to the type of vertex it belongs to.

Now, each free
infective half-edge chooses a free half-edge at rate $\beta$, uniformly at random from among all the other free half-edges.
Together the pair form an edge, and are removed from the pool of free
half-edges.
If the chosen half-edge belongs to a susceptible vertex then that vertex becomes infective.
Infective vertices also recover at rate $\rho$.

We stop the above process when there are no free
infective half-edges,  at which point
the epidemic stops spreading.  Some infective vertices may remain but
they will recover at \iid{}  exponential times without affecting
any other vertex.  In any case, they turn out to be irrelevant for our purposes.
Some susceptible and recovered half-edges may also remain, and these are
paired off uniformly at time $\infty$
to reveal the remaining edges in $\GrCM$.
This step is unimportant for the spread of the epidemic, but we perform it for the purpose of transferring our results to the simple graph $G$.

Clearly, if all the pairings are completed then the resulting graph is the multigraph $G^*$.
Moreover, the quantities of interest (numbers of susceptible, infective and recovered vertices at each time $t$) have the same distribution as if we were to reveal the multigraph $G^*$ first and run the SIR epidemic on $G^*$ afterwards.

For $t \ge 0$, let $\Sv{t}$, $\Iv{t}$ and $\Rv{t}$ denote the numbers of susceptible,
infective and recovered vertices, respectively, at time $t$.
Thus $\Sv{t}$ is decreasing and $\Rv{t}$ is increasing.
Also $\Sv{0} = \nS$, $\Iv{0} = \nI$ and $\Rv{0} = \nR$.

For the dynamics described above (with half-edges paired off dynamically, as needed), for $t \ge 0$, let $\XS{t}$, $\XI{t}$ and $\XR{t}$ be the number of free susceptible, infective and recovered half-edges
at time $t$, respectively.  Thus $\XS{t}$ is decreasing,
$\XS{0} = \sum_{k = 0}^\infty k \nSk$,  $\XI{0} = \sum_{k = 0}^\infty k
\nIk$ and $\XR{0} = \sum_{k = 0}^\infty k\nRk$.
The variables $\XS{t}$, $\XI{t}$ and $\XR{t}$ are convenient tools for the analysis of
$\Sv{t}$, $\Iv{t}$ and $\Rv{t}$, but not `observable' quantities. (They have no interpretation for the version of the SIR process on $G^*$ where the multigraph is constructed upfront.)
For the uniformly random graph $G$ with degree sequence $(d_i)_1^n$, the variables $\XS{t}$, $\XI{t}$ and $\XR{t}$, for $t \ge 0$, are defined as above conditioned on the final multigraph $G^*$ being a simple graph.

\subsection{Results}
\label{s:results}

We will show that, upon suitable scaling, the processes $\Sv{t}, I_t, R_t$, $\XS{t}, \XI{t}, \XR{t}$
converge to deterministic functions.  The limiting functions
will be written in terms of a parameterisation
$\theta_t \in [0,1]$ of time solving an ordinary differential equation
given below.  
The function $\theta_t$ can be interpreted as the limiting probability that a given
initially susceptible half-edge has not been paired with a (necessarily infective) half-edge
by time $t$.
This means that the probability that a given degree $k$ initially susceptible
vertex is still susceptible at time $t$ is asymptotically close to $\theta_t^k$.  With this in mind,
we define the function $\fSv$ by 
\begin{equation}\label{e:fSv}
 \fSv(\theta) \= \aS \sum_{k=0}^\infty p_k \theta^k, \qquad \theta \in [0,1],
\end{equation}
so the limiting fraction of susceptible vertices is $\fSv(\theta_t)$ at time $t$.  Similarly, for the number of susceptible half-edges we define
\begin{equation}\label{e:fXS}
  \fXS(\theta) \=  \aS \sum_{k=0}^\infty k \theta^k p_k = \theta\fSv'(\theta), \qquad \theta \in [0,1].
\end{equation}
For the total number of free half-edges, we let
\begin{equation}
\fXX(\theta) := 
\mX \theta^2, \qquad \theta \in [0,1].
\label{e:fX}
\end{equation}
The intuition here is that two free half-edges disappear
each time an edge is formed by pairing, so a random free half-edge is paired with
intensity twice the intensity of a susceptible free half-edge, and so
the probability that a given half-edge is
still free at time $t$ is asymptotically close to $\theta_t^2$.
For the numbers of half-edges of the remaining types,
we define (with justification in the proof below), for $\theta \in [0,1]$,
\begin{align}
  \fXR(\theta) &:= \mR \theta + \frac{\mX\rho}{\beta}\theta(1-\theta), \label{e:fXR} \\
  \fXI(\theta) &:= \fXX(\theta) - \fXS(\theta) - \fXR(\theta). \label{e:fXI}
\end{align}
Thus $\fXX(\theta) = \fXS(\theta)+\fXI(\theta)+ \fXR(\theta)$.
The corresponding limit functions for infective and recovered vertices are more
easily described by differential equations, which will be introduced
in~\eqref{e:dlIvtTmIg0} and~\eqref{e:dlIvtT}.
Note that
\begin{align}
&&\fSv(1)&=\aS, &
\\ \label{e:fX1}
\fXS(1)&=\aS\lambda=\mS,&
\fXR(1)&=\mR,&
\fXI(1)&=\mX-\mS-\mR=\mI.
\end{align}

We also introduce the `infective pressure'
\begin{equation}\label{e:pI}
\pI(\theta) \= \frac{\fXI(\theta)}{\fXX(\theta)},
\end{equation}
which appears in the differential equations~\eqref{e:dlTPtmIg0}
and~\eqref{e:dlTPt} below.

Our first two theorems concern the case where the initially infective population is macroscopic, so
that the course of the epidemic is approximately deterministic for a long time, until shortly before extinction.

\begin{theorem}\label{t:mIg0}
Let us consider the SIR epidemic on the multigraph $\GrCM$
with degree sequence $(d_i)_1^n$.  Suppose that
\ref{d:first}--\ref{d:last}
are satisfied. Let $\mI > 0$.

\begin{enumerate}
 \renewcommand{\theenumi}{\textup{(\alph{enumi})}}
 \renewcommand{\labelenumi}{\theenumi}

 \item \label{t:mIg0.a}
There is a unique $\fXIroot \in (0,1)$ with $\fXI(\fXIroot) = 0$.  Further,
$\fXI$ is strictly positive on $(\fXIroot,1]$
 and strictly negative on $(0,\fXIroot)$.
 \item \label{t:mIg0.b}
There is a unique continuously differentiable function
  $\lTP{t}:[0,\infty) \to (\fXIroot,1]$ such that
\begin{equation}\label{e:dlTPtmIg0}
\frac{d}{dt} \lTP{t} = -\beta \lTP{t} \pI(\lTP{t}), 
\qquad \lTP{0} = 1.
\end{equation}
Furthermore, $\lTP{t}\downto \fXIroot$ as $t\to\infty$.
\item \label{t:mIg0.c}
Let $\lIv{t}$ be the unique solution to
\begin{equation}\label{e:dlIvtTmIg0}
 \frac{d}{dt} \lIv{t} = \frac{\beta \fXI(\lTP{t})\fXS(\lTP{t}) }{\fXX(\lTP{t})} - \rho \lIv{t}, \; t \ge 0,\qquad \lIv{0} = \aI,
\end{equation}
and $\lRv{t} \= 1 - \fSv(\lTP{t}) - \lIv{t}$.
Then, uniformly on $[0,\infty)$,
\begin{align}
\label{e:convmIg0SIR}
\Sv{t}/n &\pto \fSv(\lTP{t}), &
 \Iv{t}/n &\pto \lIv{t}, & \Rv{t}/n &\pto \lRv{t},
\\
\label{e:convmIg0X}
\XS{t}/n &\pto \fXS(\lTP{t}), & \XI{t}/n &\pto \fXI(\lTP{t}),
&\XR{t}/n &\pto \fXR(\lTP{t}),
\end{align}
and, consequently, $\X{t}/n \pto \fXX(\lTP{t})$.

\item \label{t:mIg0.d}
Hence, the number $\Sv{\infty} \= \lim_{t \to \infty} \Sv{t}$ of susceptibles that escape infection satisfies
\[
\Sv{\infty}/n \pto \fSv(\fXIroot).
\]
\end{enumerate}
\end{theorem}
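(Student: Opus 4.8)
The plan is to establish \ref{t:mIg0.a}--\ref{t:mIg0.d} in turn; (a) and (b) are elementary analysis, (c) is the substantial step, and (d) is an immediate corollary of (c).

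For \ref{t:mIg0.a} I would study $\psi(\theta)\=\fXI(\theta)/\theta$ on $(0,1]$. By \eqref{e:fXS}, \eqref{e:fXR}, \eqref{e:fX}, $\psi(\theta)=\mX(1+\rho/\beta)\theta-\aS\sum_{k\ge1}kp_k\theta^{k-1}-\mR-\rho\mX/\beta$; each term $-\theta^{k-1}$ with $k\ge1$ is concave on $(0,1)$, so $\psi$ is concave there, and it is continuous on $(0,1]$ since $\sum_k kp_k=\lambda<\infty$ by \eqref{e:meansuscdist}. At the endpoints $\psi(1)=\fXI(1)=\mI>0$ by \eqref{e:fX1} and the hypothesis $\mI>0$, while $\psi(0+)=-\aS p_1-\mR-\rho\mX/\beta<0$ by \ref{d:p1orrhopos} together with $\aS,\mX>0$. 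A concave function has convex superlevel sets, so $\{\theta\in(0,1]:\psi(\theta)\ge0\}$ is a closed interval $[\fXIroot,1]$ with $\fXIroot\in(0,1)$; convex combinations with the endpoints give $\psi>0$ on $(\fXIroot,1)$ and $\psi<0$ on $(0,\fXIroot)$, and concavity forces $\psi'(\fXIroot)>0$, so $\fXI$ has a simple zero at $\fXIroot$. Multiplying by $\theta>0$ yields (a). For \ref{t:mIg0.b}, the field $F(\theta)\=-\beta\theta\pI(\theta)=-\beta\fXI(\theta)/(\mX\theta)$ is continuous and strictly negative on $(\fXIroot,1]$ by (a), so $t(\theta)\=\int_\theta^1\!d\vartheta/(-F(\vartheta))$ is a continuous strictly decreasing bijection of $(\fXIroot,1]$ onto $[0,\infty)$ (convergent near $1$ since $F(1)\ne0$, divergent at $\fXIroot$ since the zero there is simple); its inverse $t\mapsto\theta_t$ is continuously differentiable with $\dot\theta_t=F(\theta_t)$, $\theta_0=1$, and $\theta_t\downarrow\fXIroot$. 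Uniqueness needs no Lipschitz hypothesis (important since $\fXS'$ may blow up at $1$ under \ref{d:first}--\ref{d:last} alone): any $C^1$ solution $\tilde\theta$ with $\tilde\theta_0=1$ has $\dot{\tilde\theta}_0=F(1)<0$, is nonincreasing, cannot reach $\fXIroot$ in finite time, hence stays in $(\fXIroot,1]$, and then $\tfrac{d}{dt}t(\tilde\theta_t)=\dot{\tilde\theta}_t/F(\tilde\theta_t)=1$ with $t(\tilde\theta_0)=0$ forces $\tilde\theta_t=\theta_t$.

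For \ref{t:mIg0.c}, observe that $(\Sv t,\Iv t,\Rv t,\XS t,\XI t,\XR t)$, refined by the degree-resolved susceptible counts $\Sv t(k)$, is a pure-jump Markov chain: pairings occur at rate $\beta\XI t$, the partner half-edge being susceptible of degree $k$ with probability $\approx k\Sv t(k)/\X t$, infective with probability $\approx\XI t/\X t$, recovered with probability $\approx\XR t/\X t$; and infective vertices recover at rate $\rho\Iv t$, converting free infective half-edges into free recovered ones at total rate $\rho\XI t$. Hence the drifts $\dot{\X t}\approx-2\beta\XI t$ (two half-edges vanish per pairing), $\tfrac{d}{dt}\Sv t(k)\approx-\beta k\Sv t(k)\XI t/\X t$ (a susceptible degree-$k$ vertex is lost when one of its $k$ half-edges is chosen), and $\dot{\XR t}\approx\rho\XI t-\beta\XI t\XR t/\X t$. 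Introducing the random clock $\theta_t^{(n)}\=(\X t/\X 0)^{1/2}$ (nonincreasing, $\theta_0^{(n)}=1$), the first drift makes $\X t/n\approx\mX(\theta_t^{(n)})^2=\fXX(\theta_t^{(n)})$ essentially exact, the second gives $\Sv t(k)/n\approx(\nSk/n)(\theta_t^{(n)})^k$ whence $\Sv t/n\approx\fSv(\theta_t^{(n)})$ and $\XS t/n=\sum_k k\Sv t(k)/n\approx\fXS(\theta_t^{(n)})$, the third (solved in the $\theta$-variable, with $\XR 0/n\to\mR$) gives $\XR t/n\approx\fXR(\theta_t^{(n)})$, whence $\XI t/n\approx(\fXX-\fXS-\fXR)(\theta_t^{(n)})=\fXI(\theta_t^{(n)})$, and finally $d\log\theta_t^{(n)}\approx-\beta\XI t/\X t\,dt\approx-\beta\fXI(\theta_t^{(n)})/(\mX(\theta_t^{(n)})^2)\,dt$, i.e.\ the ODE \eqref{e:dlTPtmIg0}. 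These same drift identities explain the closed forms \eqref{e:fXR}, \eqref{e:fXI} and, via $\dot\theta_t=-\beta\fXI(\theta_t)/(\mX\theta_t)$, their mutual consistency.

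To make this rigorous I would first work on a fixed $[0,T]$, where $\theta_t>\fXIroot$, so $\fXI(\theta_t)$ is bounded below and the number of pairings up to $T$ is of order $n$. Following \cite{JansonLuczakkCore,JansonLuczakGiantCpt}, one uses the time-changed (``fast'') epidemic of \refS{s:fastepidemic} to replace the possibly-slow clock $\beta\XI t$ by one of order $n$, applies Doob's $L^2$ maximal inequality to bound each coordinate minus its compensator by $\op(n)$, and closes the coupled system by Gronwall's inequality, obtaining $\sup_{t\le T}\bigl|\XS t/n-\fXS(\theta_t)\bigr|=\op(1)$ and the analogues for the other five processes and for $\theta_t^{(n)}-\theta_t$; the sum over infinitely many degree classes is made uniform by truncating at large $K$, the tail $\sum_{k>K}k\nSk/n$ being small by the uniform summability in \refR{r:UI} — this is precisely why nothing beyond \ref{d:first}--\ref{d:last} is needed. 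Reverting the time change introduces a random time shift between fast-epidemic time and $t$, whose control is the technical content of \refApp{Atimechange} and \refS{s:time-shift}. One then extends from $[0,T]$ to $[0,\infty)$ using monotonicity of $\XS t$, $\Sv t$, $\theta_t^{(n)}$ and $\theta_t\downarrow\fXIroot$: once the residual $\XI T/n\approx\fXI(\theta_T)$ is small, only $\op(n)$ further change can occur, so the limits stay within $o(1)$ of their $t=\infty$ values for $t\ge T$. The statements for $\Iv t$ and $\Rv t$ follow since the compensator of $\Iv t$ is $\int_0^t(\beta\XI s\XS s/\X s-\rho\Iv s)\,ds$ (new infections minus recoveries), giving $\Iv t/n\pto\lIv t$ solving \eqref{e:dlIvtTmIg0} by Gronwall, and $\Rv t=n-\Sv t-\Iv t$ gives $\Rv t/n\pto 1-\fSv(\theta_t)-\lIv t=\lRv t$. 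Finally \ref{t:mIg0.d} is immediate: $\Sv\infty\=\lim_{t\to\infty}\Sv t$ exists since $\Sv t$ is nonincreasing, and the uniform convergence $\Sv t/n\pto\fSv(\theta_t)$ on $[0,\infty)$ together with continuity of $\fSv$ and $\theta_t\downarrow\fXIroot$ gives $\Sv\infty/n\pto\fSv(\fXIroot)$.

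The main obstacle is the rigorous part of (c): obtaining the convergence uniformly on the whole half-line $[0,\infty)$ under only the uniform-summability hypothesis, which forces one to control the infinitely many degree classes simultaneously, to handle the random time shift created by the time change, and to glue the compact-interval estimate to the degenerating tail as $\fXI(\theta_t)\to0$.
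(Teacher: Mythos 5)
Your parts (a) and (b) are correct, and your treatment of (b) --- building the solution through the strictly monotone reparametrisation $t(\theta)=\int_\theta^1 d\vartheta/\bigpar{\beta\vartheta\pI(\vartheta)}$ and proving uniqueness with no Lipschitz hypothesis at $\theta=1$ --- is actually the route the paper takes in \refS{ss:nosecondmoment} for the case without \ref{d:sumsquaredi=On}; in its main proof the paper simply assumes \ref{d:sumsquaredi=On} there for convenience and quotes standard ODE theory, while still using the same explicit inverse $\lttInv{}$ in the probabilistic argument. For (c), your plan (the time-changed epidemic of \refS{s:fastepidemic}, Doob's $L^2$ inequality plus Gronwall degree class by degree class, truncation at a large $K$ using the uniform summability of \refR{r:UI}, inversion of the additive functional $A_\tau$, and a monotone extension from compact intervals to $[0,\infty)$) is essentially the paper's argument. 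One minor mis-attribution: the control of the inverted time change in this theorem comes from $A_\tau\pto\lttInv{\tau}$ together with the concentration of $\iEndtt$ around $\fpT$ (\refSS{ss:pfmIg0}), not from \refS{s:time-shift}, which concerns the random shift $\iUp$ of \refT{t:mI0}.

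There is, however, one genuine gap: the uniform convergence of $\Iv{t}/n$ (and hence $\Rv{t}/n$) on the whole of $[0,\infty)$. Your Gronwall comparison of $\Iv{t}/n$ with \eqref{e:dlIvtTmIg0} carries a factor $e^{\rho T}$ and an error integrated over $[0,T]$, so it yields uniformity only on compacts; and your tail mechanism --- ``once $\XI{T}/n\approx\fXI(\lTP{T})$ is small, only $\op(n)$ further change can occur'' --- fails for the vertex counts when $\rho>0$: at such a time $T$ there may still be order $n$ infective vertices (most with no free half-edges), and these all recover after $T$, changing $\Iv{t}$ and $\Rv{t}$ macroscopically. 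The mechanism does control $\Sv{t}$, $\XS{t}$, $\XI{t}$, $\XR{t}$, $\X{t}$, since their post-$T$ changes are driven by the $\op(n)$ remaining pairings, but not $\Iv{t}$ and $\Rv{t}$. To close the argument you need the further input that $\lIv{t}\to0$ as $t\to\infty$ when $\rho>0$ (from the explicit formula \eqref{e:lIvt} and dominated convergence, using $\fXI(\lTP{s})\to0$), combined with the bound $0\le \Iv{t}/n\le 1-\Sv{t}/n-\Rv{t_1}/n$ for $t\ge t_1$, which makes $\sup_{t\ge t_1}\abs{\Iv{t}/n-\lIv{t}}$ small \whp; and when $\rho=0$ a separate argument is required (monotonicity of both $\Iv{t}$ and $\lIv{t}$, the latter increasing to a finite limit), since then $\lIv{t}$ does \emph{not} tend to $0$. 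This is exactly the content of \eqref{e:bdItvt1}--\eqref{e:bdItvt11} in the paper; with these additions, and $\Rv{t}=n-\Sv{t}-\Iv{t}$, your outline matches the paper's proof, and (d) follows as you say.
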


\begin{theorem}\label{t:mIg0-1}
Let us consider the SIR epidemic on the
uniform simple graph $\Gr$
with degree sequence $(d_i)_1^n$.  Suppose that
\ref{d:first}--\ref{d:last} and \ref{d:sumsquaredi=On}
are satisfied. Let $\mI > 0$.
Then the conclusions of Theorem~\ref{t:mIg0} hold.
\end{theorem}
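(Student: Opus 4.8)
The plan is to deduce \refT{t:mIg0-1} from \refT{t:mIg0} by the standard conditioning-on-simplicity argument, which is exactly why \ref{d:sumsquaredi=On} is assumed. First I would recall that, by the construction described just before the Results subsection, running the SIR epidemic on the uniform simple graph $\Gr$ with the half-edges revealed dynamically is precisely the dynamic half-edge process on $\GrCM$ \emph{conditioned on the final multigraph $\GrCM$ being simple}; the joint law of all the processes $(\Sv t,\Iv t,\Rv t,\XS t,\XI t,\XR t)_{t\ge0}$ on $\Gr$ is the law of the same processes on $\GrCM$ conditioned on the event $\cS_n \=\{\GrCM\text{ is simple}\}$. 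This is the reason the extra pairing of leftover susceptible and recovered half-edges at time $\infty$ was built into the construction.

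The key step is then the quantitative input from \cite{Janson:2009:PRM:1520305.1520316}: under \ref{d:sumsquaredi=On}, i.e. $\sum_k k^2 n_k = O(n)$, we have $\liminf_{n\to\infty}\P(\cS_n) = \liminf_{n\to\infty}\P(\GrCM\text{ is simple}) > 0$; call this liminf $c_0>0$. Now, the conclusions of \refT{t:mIg0} are all of the form $Y\nn_t \pto y(t)$ uniformly on $[0,\infty)$ for various processes $Y\nn$ and deterministic limits $y$, that is $\sup_{t\ge0}\abs{Y\nn_t - y(t)}\pto 0$ on $\GrCM$. For any $\eps>0$, writing $\cE_n = \{\sup_{t\ge0}\abs{Y\nn_t-y(t)}>\eps\}$, we have $\P(\cE_n)\to 0$, hence
\begin{equation*}
\P\bigpar{\cE_n \mid \cS_n} = \frac{\P(\cE_n\cap\cS_n)}{\P(\cS_n)} \le \frac{\P(\cE_n)}{\P(\cS_n)} \le \frac{\P(\cE_n)}{c_0 + o(1)} \to 0 .
\end{equation*}
Thus each convergence in probability that holds for the multigraph process also holds for the conditioned (simple-graph) process. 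Since \refT{t:mIg0} has finitely many such assertions (the six scaled processes in \eqref{e:convmIg0SIR}--\eqref{e:convmIg0X}, the consequence $\X t/n\pto\fXX(\lTP t)$, and the final-size statement in \ref{t:mIg0.d}), applying this bound to each in turn transfers all of them. The deterministic parts of \refT{t:mIg0} --- the existence and uniqueness of $\fXIroot$ in \ref{t:mIg0.a}, of $\lTP t$ in \ref{t:mIg0.b}, and of $\lIv t$ in \ref{t:mIg0.c}, together with the sign and monotonicity statements and the limit $\lTP t\downto\fXIroot$ --- involve only the limiting parameters $\aS,(p_k),\lambda,\mX,\mI,\mR,\beta,\rho$, which are unchanged, so they hold verbatim.

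There is essentially no obstacle here beyond bookkeeping: the real content is in \refT{t:mIg0} and in the cited simplicity estimate; the only point requiring a line of care is that conditioning preserves \emph{uniform} convergence in probability (handled above by taking $\cE_n$ to be the single event $\{\sup_t|\cdot|>\eps\}$ rather than conditioning pointwise in $t$), and that \ref{d:sumsquaredi=On} indeed delivers $\liminf\P(\cS_n)>0$ via \cite{Janson:2009:PRM:1520305.1520316}. I should also note explicitly that, per \refR{r:2ndmoment}, \ref{d:sumsquaredi=On} already implies \ref{d:SI2ndmoment}, so the hypotheses \ref{d:first}--\ref{d:last} needed for \refT{t:mIg0} on $\GrCM$ are all in force. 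Hence the conclusions of \refT{t:mIg0} hold for $\Gr$, which is \refT{t:mIg0-1}.
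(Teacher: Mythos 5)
Your proof is correct and follows exactly the paper's approach: the paper's Section~6 disposes of \refT{t:mIg0-1} in a single sentence by conditioning on $\GrCM$ being simple and invoking \ref{d:sumsquaredi=On} together with \cite{Janson:2009:PRM:1520305.1520316} to get $\liminf_n\P(\GrCM\text{ simple})>0$, which is precisely the argument you spell out. Your explicit remark that conditioning must be applied to the single event $\{\sup_t|\cdot|>\eps\}$ (preserving uniformity) and your observation via \refR{r:2ndmoment} that \ref{d:sumsquaredi=On} implies \ref{d:SI2ndmoment} are correct bookkeeping details that the paper leaves implicit.
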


Decreusefond et al.~\cite{DDMT12} obtain a related result, assuming that the fifth moment of the degree of a random vertex is uniformly bounded as $n \to \infty$.

\begin{remark}
We can give examples in which $\Sv{t}/n$, $\Iv{t}/n$ and $\Rv{t}/n$ fail to
converge to deterministic limits when assumption \ref{d:SI2ndmoment} does
not hold.  Generally, we believe \ref{d:SI2ndmoment} is necessary for the
convergence to deterministic limits  in Theorems \ref{t:mIg0} and \ref{t:mIg0-1}, but have not
attempted to prove it.
\end{remark}

Our third and fourth theorems concern the case where there are initially a small number of infectives.
Let
\begin{equation}\label{e:R0}
\Rzero \= \parfrac{\beta}{\rho + \beta} \parfrac{\aS}{\mX}
  \sum_{k=0}^\infty (k-1) k p_k;
\end{equation}
this quantity can be interpreted as the basic reproductive ratio of the epidemic.
When $\Rzero > 1$, then
there is a positive probability that a large epidemic develops in the population,
as previously identified in the literature on epidemic models, such as~\cite{AnderssonSocialNetworks,Newman_2002,Volz_2007,BohmanPicollelli}.
In that event, once established, the evolution of the epidemic is
approximately deterministic, as in~Theorems \ref{t:mIg0} and~\ref{t:mIg0-1}.

We will prove below that there is a unique $\fXIroot \in (0,1)$ with
$\fXI(\fXIroot) = 0$, and that if there is a large epidemic, the number of
susceptibles that never get infected is approximately $n\fSv(\fXIroot)$.
Fix a number $s_0 \in (\fSv(\fXIroot), \aS)$, i.e., between the (approximate)
fractions of susceptibles at the beginning and at the end of
the epidemic in the case that a large epidemic develops,
and let
\begin{equation}\label{e:T0}
\iUp \= \inf\{ t \ge 0: \Sv{t} \le n\calibS \}.
\end{equation}
(This means that $\iUp=\infty$ if $S_t$ never falls below $n\calibS$.
We will see that this corresponds to the case of a small outbreak.)
We shift the initial condition of the limiting differential equation,
now defined on $(-\infty, \infty)$,
so that $t = 0$ corresponds to the time $\iUp$ in the random process, by which
the fraction of susceptible individuals has fallen from about $\aS= \fSv (1)$ to some
fixed smaller $\calibS$. By time $\iUp$, a positive fraction of the population
has been infected, and from that point onwards the quantities of interest
follow a law of large numbers. The exact choice of $s_0$ is unimportant.

We extend the processes to be defined on
$(-\infty,\infty)$ by taking $\Sv{t} = \Sv{0}$ for $t < 0$, and similarly
for the other processes.

\begin{theorem}\label{t:mI0}
Let us consider the SIR epidemic on the multigraph $\GrCM$
with degree sequence $(d_i)_1^n$.
Suppose that \ref{d:first}--\ref{d:last} and \ref{d:sumsquaredi=On} are satisfied.
Suppose also that $\aI=\mI = 0$ but there is initially at least one infective vertex with non-zero degree.  

\begin{enumerate}
 \renewcommand{\theenumi}{\textup{(\roman{enumi})}}
 \renewcommand{\labelenumi}{\theenumi}

\item \label{t:mI0.i}
If $\Rzero \le 1$ then the number $\nS - \Sv{\infty}$ of initially susceptible vertices that ever get infected is $\op(n)$.

\item \label{t:mI0.ii}
Suppose $\Rzero > 1$.
\begin{enumerate}
 \renewcommand{\theenumii}{\textup{(\alph{enumii})}}
 \renewcommand{\labelenumii}{\theenumii}

 \item \label{t:mI0.ii.a}
There is a unique $\fXIroot \in (0,1)$ with $\fXI(\fXIroot) = 0$.  Further, $\fXI$ is strictly positive on $(\fXIroot,1)$
 and strictly negative on $(0,\fXIroot)$.

 \item \label{t:mI0.ii.b}
Let $\calibS \in (\fSv(\fXIroot),\fSv(1))$.
Then there is a unique continuously differentiable $\lTP{t}:\R \to (\fXIroot,1)$
such that
\begin{equation}\label{e:dlTPt}
\frac{d}{dt} \lTP{t} = -\beta \lTP{t} \pI(\lTP{t}),
\qquad \lTP{0} = \fSv^{-1}(\calibS).
\end{equation}
Furthermore, $\lTP{t}\downto \fXIroot$ as $t\to\infty$ and $\lTP{t}\upto 1$ as $t\to -\infty$.

\item \label{t:mI0.ii.c}
Let $\iUp$ be defined by \eqref{e:T0}.
Then
$\liminf_{n \to  \infty} \Prob(\iUp < \infty) > 0$.
Furthermore, if
 the initial number of infective half-edges
$\xnio \to \infty$, then $\Prob(\iUp <\infty) \to 1$.

\item \label{t:mI0.ii.d}
Let $\lIv{t}$ be the unique solution to
\begin{equation}\label{e:dlIvtT}
 \frac{d}{dt} \lIv{t} = \frac{\beta \fXI(\lTP{t})\fXS(\lTP{t}) }{\fXX(\lTP{t})} - \rho \lIv{t}, \qquad \lim_{t \to -\infty} \lIv{t} = 0,\\
\end{equation}
and $\lRv{t} \= 1 - \fSv(\lTP{t}) - \lIv{t}$.

Conditional on $\iUp < \infty$, then,
uniformly on $(-\infty,\infty)$,
\begin{align}
\Sv{\iUp+t}/n &\pto \fSv(\lTP{t}), &
\Iv{\iUp+t}/n &\pto \lIv{t}, & \Rv{\iUp+t}/n &\pto \lRv{t},
\label{e:convscSIR} \\
\XS{\iUp+t}/n &\pto \fXS(\lTP{ t}),& \XI{\iUp+t}/n &\pto \fXI(\lTP{t}), &
\XR{\iUp+t}/n &\pto \fXR(\lTP{t}),
\label{e:convscX}
\end{align}
and, consequently, also 
$\X{\iUp+t}/n \pto \fXX(\lTP{t})$.

\item \label{t:mI0.ii.e}
Conditional on $\iUp < \infty$, the number of susceptibles
that escape infection
satisfies
\[
\Sv{\infty}/n \pto \fSv(\fXIroot).
\]

\item \label{t:mI0.ii.f}
The number of susceptibles that ever get infected $\Sv0-\Sv\infty$
satisfies $\Sv0-\Sv\infty=\op (n)$ on the event $\iUp = \infty$, in the sense that, for all $\eps > 0$, $\Prob (\iUp = \infty, \Sv0-\Sv\infty > \eps n ) = o(1)$ as $n \to \infty$.

Similarly,
$X_{S,0}-X_{S,\infty}=\op (n)$, $\sup_{t \ge 0} \XI{t}=\op (n)$,
$\sup_{t \ge 0} (\X0-\X{t})=\op (n)$ on $\iUp = \infty$.
\end{enumerate}
\end{enumerate}

The same result holds 
even without assumption
\ref{d:sumsquaredi=On}, except that, in this case,  it is possible to have
$\lTP{t}:\R \to (\fXIroot,1]$ with $\lTP{t}=1$ for $t\le \hat{A}_0$, for some $\hat{A}_0 < 0$.
\end{theorem}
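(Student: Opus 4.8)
The plan is to handle the various parts roughly in the order they are stated, deriving the analytic facts first and then running the probabilistic machinery that connects the random processes to the solution of the limiting ODE. For part~\ref{t:mI0.ii.a}, the strategy is to study the function $\fXI(\theta) = \mX\theta^2 - \fXS(\theta) - \fXR(\theta)$ on $[0,1]$. Using \eqref{e:fX1} we have $\fXI(1) = \mI = 0$, so $1$ is always a root; the real content is to show there is exactly one root in the open interval $(0,1)$ and to pin down the sign of $\fXI$ on either side of it. I would compute $\fXI(\theta)/\theta$ or consider $\fXI$ near $0$ and near $1$: near $0$, $\fXI(\theta) \sim -\fXS(\theta) - \mR\theta < 0$ (using $p_0$ can be nonzero but $\fXS(\theta) \to 0$, and the $\mR\theta$ and $\frac{\mX\rho}{\beta}\theta$ terms dominate, which is exactly where assumption~\ref{d:p1orrhopos} enters to guarantee $\fXI$ is genuinely negative near $0$ rather than identically zero); near $1$, examine $\fXI'(1)$, whose sign is governed by $\Rzero - 1 > 0$. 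A convexity/monotonicity argument on $\fXI(\theta)/\theta$ or on $\pI$ then gives uniqueness of the root $\fXIroot \in (0,1)$ and the claimed sign pattern. Part~\ref{t:mI0.ii.b} is then a standard ODE existence/uniqueness argument: $-\beta\theta\pI(\theta)$ is locally Lipschitz on $(\fXIroot,1)$ (the denominator $\fXX(\theta) = \mX\theta^2$ stays away from $0$, and $\fXI$ is smooth), strictly negative there, and the solution started at $\fSv^{-1}(\calibS) \in (\fXIroot,1)$ is strictly decreasing; since $\fXI$ vanishes to first order at $\fXIroot$ and at $1$, the solution takes infinite time to reach either endpoint, giving $\lTP{t}\downto\fXIroot$ and $\lTP{t}\upto 1$.

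The heart of the proof is parts~\ref{t:mI0.ii.c}--\ref{t:mI0.ii.e}, and here the approach is to reduce to \refT{t:mIg0} by a restart argument at the stopping time $\iUp$. First, for the early phase, I would compare the epidemic started from a few infectives with a branching process: the number of infective half-edges is dominated above and below by Galton--Watson-type processes whose offspring mean is essentially $\Rzero$ (this is what \refS{s:branch} makes precise, but for the present theorem it suffices to know the survival/extinction dichotomy). When $\Rzero \le 1$ the branching process dies out, the total progeny is $\op(n)$, and the coupling with the epidemic is valid for long enough to conclude $\nS - \Sv\infty = \op(n)$, giving~\ref{t:mI0.i}; simultaneously this shows $\P(\iUp = \infty)\to 1$ in that regime, and more importantly, when $\Rzero > 1$, that the branching process survives with probability bounded away from $0$ (and $\to 1$ if $\xnio\to\infty$), and on survival the number of infective half-edges reaches order $\sqrt n$, say, after which a law-of-large-numbers/fluid-limit argument takes over and drives $\Sv{t}$ down past $n\calibS$ whp; this yields $\liminf\P(\iUp<\infty)>0$ and the second sentence of~\ref{t:mI0.ii.c}. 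Then, conditional on $\iUp < \infty$, I would apply the strong Markov property at time $\iUp$: the state of the epidemic at $\iUp$ has (approximately, whp) the right "macroscopic" initial data for a re-indexed epidemic, namely $\Sv{\iUp} \approx n\calibS$, $\XS{\iUp}\approx n\fXS(\fSv^{-1}(\calibS))$, and only $o(n)$ recovered — in particular the re-started process satisfies analogues of \ref{d:first}--\ref{d:last}, with "$\mI$" now genuinely positive (of order between $\sqrt n/n$ and a constant — one needs a little care to get a positive constant, which is why $\calibS$ is chosen strictly below $\aS$: by time $\iUp$ a positive fraction has been infected and the number of infective half-edges is $\Theta(n)$). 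Applying \refT{t:mIg0} to this re-started configuration-model epidemic gives the uniform convergence \eqref{e:convscSIR}--\eqref{e:convscX} on $[0,\infty)$; extending to $(-\infty,0]$ uses the branching-process description of the sub-macroscopic early phase, where the rescaled processes are all $\op(1)$ and the limit functions $\fSv(\lTP{t})\to\aS$, $\lIv{t}\to 0$ as $t\to-\infty$ match. Part~\ref{t:mI0.ii.e} is then immediate from $\lTP{t}\downto\fXIroot$ and continuity of $\fSv$, together with the fact that essentially no further infections occur after the fluid limit has been tracked to its endpoint (the leftover infectives have degree accounted for in the $o(n)$ error).

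Part~\ref{t:mI0.ii.f} is the complement: on the event $\iUp = \infty$ the epidemic never reaches macroscopic size, so it stays coupled to a (sub)critical-looking branching process whose total progeny, and total number of half-edges used, is $\op(n)$; hence $\Sv0 - \Sv\infty = \op(n)$, $\sup_t\XI{t} = \op(n)$ and $\sup_t(\X0 - \X{t}) = \op(n)$ on that event. The final sentence of the theorem, removing \ref{d:sumsquaredi=On}, is handled by noting that \ref{d:sumsquaredi=On} was used only to transfer from $\GrCM$ to $\Gr$; for the multigraph statement we may drop it, but then $\fXI'(1)$ may be $\ge 0$ (the series $\sum(k-1)kp_k$ defining $\Rzero$ could diverge or $\fXI$ could vanish to higher order at $1$), so the ODE \eqref{e:dlTPt} started near $1$ may sit at $\lTP{t}=1$ for an initial stretch $t\le\hat A_0$ before detaching — which is precisely the modification stated; the remaining arguments go through verbatim with the domain $(\fXIroot,1]$ in place of $(\fXIroot,1)$.

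The main obstacle I expect is the restart/coupling step in~\ref{t:mI0.ii.c}--\ref{t:mI0.ii.d}: one must show that, conditional on $\iUp<\infty$, the configuration-model epidemic at time $\iUp$ looks (whp, after rescaling by $n$) like a fresh configuration-model epidemic satisfying the hypotheses of \refT{t:mIg0} with a positive infective mass, and that the "defects" accumulated during the early random phase (the $o(n)$ recovered vertices, the randomly depleted degree sequence, the dependence introduced by conditioning on $\iUp<\infty$) do not spoil either the regularity assumptions or the uniform convergence. Making the early-phase branching approximation quantitative enough to conclude both the survival probability statements and the matching of initial conditions at $\iUp$ — and gluing the $(-\infty,0]$ and $[0,\infty)$ behaviours into a single uniform limit on $\R$ — is where the real work lies.
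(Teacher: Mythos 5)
Your overall plan is sound for parts (a), (b), (c), (e), (f), but the crucial part (d) has a genuine gap, and the gap is exactly where the paper's time-change machinery (Lemma~\ref{l:LLN}) does its real work.

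Your proposal for (d) is to apply the strong Markov property at $\iUp$ and appeal to Theorem~\ref{t:mIg0} for the restarted epidemic, which has macroscopic initial infective mass. That only yields the uniform convergence for $t\ge0$. You then claim the extension to $(-\infty,0]$ ``uses the branching-process description of the sub-macroscopic early phase, where the rescaled processes are all $\op(1)$ and the limit functions match at $t\to-\infty$.'' But the range $t\in[-t_1,0]$ is \emph{not} sub-macroscopic: by definition of $\iUp$, at times $\iUp+t$ with $t$ moderately negative a positive fraction of the population is already infected or recovered (the limit $\fSv(\lTP{t})$ lies strictly between $\aS$ and $\calibS$), so you need a genuine law of large numbers for this intermediate macroscopic phase, not a coupling to a branching process whose total progeny is $o(n)$. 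Pushing the calibration level $\calibS$ up towards $\fSv(1)=\aS$ does not help, since then $\fXI(\fSv^{-1}(\calibS))\to\fXI(1)=\mI=0$ and the restarted epidemic loses its macroscopic infective mass, which is precisely the obstruction you were trying to avoid. The paper sidesteps this entirely by proving the LLN (Lemma~\ref{l:LLN}) for the \emph{time-changed} epidemic uniformly on $[0,\tau_1\wedge\iEndtt]$, where the rate multiplication compresses the whole slow early phase into a short $\tau$-interval near $0$; it then studies the inverse time-change $\tau(t)$, shows $\tau(\iUp)\pto\taz$ and, via the additive functional $A_\tau$, that $\tau(\iUp+t)\pto\ltt(t)$ uniformly on all of $\R$, and reads off \eqref{e:convscSIR}--\eqref{e:convscX} from the time-changed LLN plus uniform continuity of the limit functions; convergence of $I_t/n$ needs a separate Arzel\`a--Ascoli compactness argument because the infective count does not factor through $\lTP{t}$.

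A related soft spot: in (c) you write that ``on survival the number of infective half-edges reaches order $\sqrt n$, after which a law-of-large-numbers/fluid-limit argument takes over.'' The handover from the branching-process scale $\Theta(\sqrt n)$ to the fluid scale $\Theta(n)$ is not covered by Theorem~\ref{t:mIg0}, which requires $\mI>0$ as a limiting constant; one would need an intermediate argument to bridge the scales. The paper instead stops the random-walk comparison when $\eps n$ infective half-edges have accumulated or $\eps n$ pairings have occurred, and appeals to the dichotomy in part (f) — established independently from Lemma~\ref{l:LLN} — to conclude that $\iUp<\infty$ on that event. So while your branching-process heuristic is the right picture, the bridging step is not spelled out and is not trivial to fill in without something like the paper's time-change.

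Parts (a), (b), (e) and (f), and the final sentence about dropping \ref{d:sumsquaredi=On}, are argued essentially as in the paper (concavity of $h=\fXI/\theta$, local Lipschitzness of $\pI$ with the explicit inverse $\lttInv$, continuity of $\fSv$, and the modification to the domain of $\lTP t$ when $\lttInv0>-\infty$), so no issues there.
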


\begin{theorem}\label{t:mI0-1}
Let us consider the SIR epidemic on the
uniform simple graph $\Gr$
with degree sequence $(d_i)_1^n$.  Suppose that
\ref{d:first}--\ref{d:last} and \ref{d:sumsquaredi=On}
are satisfied. Let $\mI= 0$.
Then the conclusions of Theorem~\ref{t:mI0-1} hold.
\end{theorem}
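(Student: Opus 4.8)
The plan is to derive Theorem~\ref{t:mI0-1} from Theorem~\ref{t:mI0} by the same conditioning argument used to pass from Theorem~\ref{t:mIg0} to Theorem~\ref{t:mIg0-1}. Recall that the uniform simple graph $\Gr$ has the law of the configuration multigraph $\GrCM$ conditioned on the event $\cS_n$ that $\GrCM$ is simple, and that, by assumption \ref{d:sumsquaredi=On} and the main theorem of \cite{Janson:2009:PRM:1520305.1520316}, $\liminf_n \Prob(\cS_n) =: p_* > 0$. Hence for any events $\cE_n$ we have $\Prob(\cE_n \mid \cS_n) \le \Prob(\cE_n)/\Prob(\cS_n)$, so every assertion of the form ``$\Prob(\cE_n) \to 0$'' that holds for the epidemic on $\GrCM$ also holds for the epidemic on $\Gr$. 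Parts \ref{t:mI0.ii.a} and \ref{t:mI0.ii.b} of Theorem~\ref{t:mI0} are purely analytic statements about the functions $\fXI$ and $\lTP{}$ and require nothing further; part \ref{t:mI0.i}, the second half of \ref{t:mI0.ii.c} (the case $\xnio \to \infty$, i.e.\ $\Prob(\iUp = \infty) \to 0$), and all the $\op(n)$ statements of \ref{t:mI0.ii.f} are of the above form and transfer immediately.

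It remains to handle the first half of \ref{t:mI0.ii.c} and the convergences of \ref{t:mI0.ii.d}--\ref{t:mI0.ii.e}, which are conditioned on $\{\iUp < \infty\}$. Here I would first note that, once we know $\liminf_n \Prob(\iUp < \infty) > 0$ for the epidemic on $\Gr$, each statement ``conditional on $\iUp < \infty$, $X\nn \pto c$'' is equivalent to the unconditional statement ``$\Prob(\iUp < \infty,\ |X\nn - c| > \eps) \to 0$ for every $\eps > 0$'': the reverse implication is immediate, and the forward one follows by dividing by $\Prob(\iUp < \infty)$, which is bounded away from $0$. Applying this to each of $\Sv{\iUp + t}/n$, $\Iv{\iUp + t}/n$, $\XS{\iUp+t}/n,\dots$ (with the suprema over $t$ absorbed into the event) turns \ref{t:mI0.ii.d}--\ref{t:mI0.ii.e} into ``$\Prob(\cdot)\to 0$'' statements, which then transfer from $\GrCM$ to $\Gr$ exactly as above. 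Thus everything reduces to showing $\liminf_n \Prob(\iUp < \infty) > 0$ for the epidemic on $\Gr$.

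This last point is the crux, since $\cS_n$ has probability bounded away from $0$ but not from $1$, so conditioning on it could in principle destroy the positive probability of $\{\iUp < \infty\}$. I would obtain it from the branching-process analysis of \refS{s:branch}. There the early phase of the half-edge exploration is coupled to a Galton--Watson process whose offspring law is the size-biased limiting susceptible-degree distribution from \ref{d:asympsuscdist}, and an outbreak that survives this early phase (equivalently, reaches some $\omega_n \to \infty$ with $\omega_n = o(n)$ infective half-edges) must, with probability $1 - o(1)$, grow to size $\Theta(n)$ and hence drive $\Sv{}$ below $n\calibS$; so $\{\iUp < \infty\}$ agrees, up to an event of probability $o(1)$, with this early-escape event, and $\Prob(\iUp < \infty) \to q$, where $q > 0$ is the relevant Galton--Watson survival probability (positive because $\Rzero > 1$). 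The same coupling applies to the exploration on $\Gr$ because it involves only $o(n)$ pairings, while $\cS_n$ is a global property of $\GrCM$; making precise the resulting asymptotic independence (so that $\Prob(\iUp < \infty \mid \cS_n) \to q$ as well) is the main technical obstacle. A cleaner alternative, if one prefers, is to rerun the branching-process comparison of \refS{s:branch} directly on $\Gr$, using that the local weak limit of $\Gr$ under \ref{d:asympsuscdist} and \ref{d:sumsquaredi=On} is the same Galton--Watson tree as for $\GrCM$. Either way, once $\liminf_n \Prob(\iUp < \infty) > 0$ is established on $\Gr$, the reduction above finishes the proof.
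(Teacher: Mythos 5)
Your reduction is sound and matches the paper's: the $o(1)$-probability and w.h.p.\ statements (parts (i), (ii.f), the case $\xnio\to\infty$ of (ii.c), and, once $\liminf_n\Prob(\iUp<\infty)>0$ is known on $\Gr$, the conditional convergences in (ii.d)--(ii.e)) all transfer by conditioning on simplicity, since $\liminf_n\Prob(\GrCM \text{ simple})>0$ under \ref{d:sumsquaredi=On}. But the one point you correctly identify as the crux --- $\liminf_n\Prob(\iUp<\infty)>0$ for $\Gr$ itself --- is exactly where your argument stops. You offer two routes and carry out neither: the claim that ``the same coupling applies to the exploration on $\Gr$ because it involves only $o(n)$ pairings, while $\cS_n$ is a global property'' is precisely the assertion that needs proof, not a proof; conditioning on simplicity biases the law of even the first few pairings (for instance it forbids loops at the initially infective vertices), so the sequential pairing on $\Gr$ is no longer uniform and the multigraph branching-process coupling does not apply verbatim. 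Likewise, invoking ``the local weak limit of $\Gr$'' begs the question, since establishing that limit for the conditioned graph requires the same kind of contiguity/independence argument, and in any case local weak convergence does not by itself give the probability of a large outbreak started from the prescribed initial infectives jointly with the conditioning. You yourself flag the asymptotic independence as ``the main technical obstacle,'' which is an admission that the proof is not complete.

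The paper closes this gap in \refS{s:simple} with a concrete argument you would need to supply: pass to subsequences with $\xnio=O(1)$ and constant $(\nIk)$; replace $\{\iUp=\infty\}$ by the event $\cL$ that at most $\log n$ vertices are infected, which coincides with it w.h.p.\ by \refT{T:GW2}\ref{tgw2.ii} (proved for the multigraph); and then prove $\P(W=0\mid\cL)=\P(W=0)+o(1)$, where $W$ counts loops and pairs of parallel edges, by the method of moments. The key devices there are: splitting $W=\Wa+\Wb$ according to whether an initially infective vertex is involved, with $\E\Wb\to0$; observing that conditioning on $I_{\ga_1}\dotsm I_{\ga_m}=1$ for $\ga_i\in\cAa$ fixes only $O(1)$ pairings and leaves another configuration model $\Grxx$ satisfying the same hypotheses, to which the multigraph \refT{T:GW1} applies; and using $\max_i d_i=O(n^{1/2})$ (from \ref{d:sumsquaredi=On}) to show the infection w.h.p.\ never reaches the $O(1)$ vertices involved in $\ga_1,\dots,\ga_m$ within the first $\log n$ infections, so the conditioning does not perturb $\P(\cL)$. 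Together with $W\dto\WW$ with all moments (and $\WW$ determined by its moments), this yields the asymptotic independence and hence $\P(\iUp=\infty\mid\GrCM\text{ simple})=\QQ+o(1)$ with $\QQ<1$, which is what part (ii.c) needs. Without an argument of this kind (or a genuine substitute), your proposal does not establish the theorem.
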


It  is possible that
$\P(\iUp = \infty)\to0$, and then the statements in
Theorem \ref{t:mI0}\ref{t:mI0.ii.f}
are trivial. In order to prove that quantities of interest  are small
conditional on $\iUp=\infty$ in this case, we would need to study the speeed
at which $\P(\iUp = \infty)\to0$. Nevertheless, the
theorem shows a dichotomy when $\Rzero>1$: \whp{} either $\iUp=\infty$ and the
outbreak is small, with only a few individuals infected;
or $\iUp<\infty$ and the outbreak is large,
with $(\alpha_S-\fSv(\fXIroot))n+o(n)$ individuals infected
(a more detailed description of the evolution is given in \ref{t:mI0.ii.d}).

In fact, we will take $\iUp<\infty$
as the definition of a large outbreak.
(Formally this depends on the choice of $s_0$, but the theorem shows that
any two choices \whp{} yield the same result.)
Thus the probability $\P(\iUp<\infty)$ in \ref{t:mI0.ii.c} is, by definition,
the probability of a large outbreak.  We give a formula for this probability in \refT{T:GW1},
using a branching process approximation
to the early stage of the epidemic
(or an equivalent approximation using a random walk),
see further Sections \ref{s:large-epidemic} and \ref{s:branch}.
The condition $\Rzero > 1$ 
can be interpreted as supercriticality of this branching process approximation.

Furthermore, it turns out that a small outbreak is really small. In \refT{T:GW2}, we sharpen Theorem \ref{t:mI0.ii.f}
by showing that, for a small outbreak,
only $\Op(1)$ susceptibles are infected,
both in the (sub)critical case ($\Rzero \le 1$) provided $\xnio=O(1)$,
and in the supercritical case ($\Rzero > 1$).

For bounded degree sequences and $\Rzero \neq 1$, a result similar to~Theorems \ref{t:mI0} and~\ref{t:mI0-1} is proven in~\cite{BohmanPicollelli} by Bohman and Picollelli.
The threshold in $\Rzero$ for a possible large outbreak and the final size of a large outbreak are derived heuristically in \cite{AnderssonSocialNetworks, Newman_2002, Volz_2007}.
All the above papers assume that initially there are no recovered vertices.
Our motivation for allowing the presence of initially recovered individuals
is to be able to analyse simple vaccination strategies, see \refS{s:vaccination}.  Before
doing that, we give some connections to related results.

\medskip

Theorems \ref{t:mIg0} and \ref{t:mIg0-1} imply ${\XI{t}}/{\X{t}} \pto \pI(\theta_t)$ and
${\XS{t}}/{\X{t}} \pto \pS(\theta_t)$ uniformly when $\mI > 0$, where
$\pS(\theta)\=\fXS(\theta)/\fXX(\theta)$
is defined analogously to $\pI$ in \eqref{e:pI}. Theorems  \ref{t:mI0} and \ref{t:mI0-1} yield
the same result for the time shifted process when $\mI = 0$ and $\Rzero
>1$, conditional on a large outbreak. To explain the connection with \cite{Volz_2007}, let
\begin{equation}
\gS(\theta)\= \sum_{k=0}^\infty p_k \theta^k, \qquad \theta \in [0,1],
\end{equation}
the probability generating function for the asymptotic degree
distribution of initially susceptible vertices.
Note that $\fSv(\theta) = \aS \gS(\theta)$
and $\fXS(\theta) = \aS \theta \gS'(\theta)$.
Differentiating $\pI(\theta_t)$ and $\pS(\theta_t)$ yields,
using \eqref{e:dlTPtmIg0} and \eqref{e:fXS}--\eqref{e:fXI},
\begin{align}
\frac{d\pI(\theta_t)}{dt} &= \pI(\theta_t)\left( -(\rho + \beta) + \beta \pI(\theta_t) + \beta  \pS(\theta_t)\theta_t\frac{\gS''(\theta_t)}{\gS'(\theta_t)}\right),
\\
\frac{d\pS(\theta_t)}{dt}& = \beta \pI(\theta_t) \pS(\theta_t)\left(1 -
\theta_t \frac{\gS''(\theta_t)}{\gS'(\theta_t)}\right).
\end{align}
These are the `Volz equations' \cite[Table 3]{Volz_2007} mentioned in
the introduction.
Volz \cite{Volz_2007} derived them heuristically,
assuming that the number of edges from a newly infective vertex to susceptible, infective
and recovered vertices has multinomial distribution with parameters $\pI$, $\pS$ and $1-\pS - \pI$.

\refT{t:mI0-1} relates to the existence of a giant
component in $\Gr$ as follows.  The epidemic
spreads only within connected components of $\Gr$.  Further, if there are no recoveries,
then all vertices connected to an initially infective vertex eventually get infected.
Indeed, when $\rho = \mI = \mR = 0$, the threshold $\Rzero > 1$ is equivalent to
$\sum_{k = 0}^\infty k (k - 2)p_k > 0$; this
is the well known condition of Molloy and Reed \cite{molloy1995critical}
for existence of a giant component.  Also,
in part \ref{t:mI0.ii.a}, the equation defining $\fXIroot \in (0,1)$ becomes
$\lambda \fXIroot^2 - \sum_{k = 0}^\infty k p_k\fXIroot^k  = 0$ in this case.
With this value of $\fXIroot$,
and assuming $\aI=\aR=0$ so $\aS=1$,
it is known that
$1 - \fSv(\fXIroot) = 1 - \sum_{k = 0}^\infty p_k\fXIroot^k$ is the
fraction of vertices in the giant
component
\cite{molloy1998size} (see also \cite{JansonLuczakGiantCpt}).

The connection to the giant component explains why \ref{d:p1orrhopos}
is needed, at least when $\Rzero = 1$. 
Suppose that~\ref{d:p1orrhopos} is not satisfied, \ie{} both $\rho = \mR = 0$ and $p_1 = 0$.
If also $\mI = 0$ 
then $\Rzero = 1$ is equivalent to $\sum_{k = 0}^\infty k(k-2)p_k = 0$, and
so only $p_0$ and $p_2$ can be non-zero.
At least three different types of behaviour of component sizes in $\Gr$ are possible in this case.  We will demonstrate
them with the following examples from
\cite[Remark 2.7]{JansonLuczakGiantCpt}, see also \refR{r:h=0}.  We assume that $\nR = 0$ in each example.

The first example is a random 2-regular graph, that is $n_2 = n$ for all $n$. In this case, all the components are cycles.
Let $V_1 \ge V_2 \ge $ denote the ordered component sizes.
Then $V_1/n$ converges weakly to a non-degenerate distribution on $[0,1]$,
and the same holds for $V_2/n$, $V_3/n$, and so on \cite[Lemma 5.7]{ArratiaBarbourTavare}.
Let us suppose that there is initially a lone infective vertex.
The number of vertices in the component it occupies (and hence eventually infects) is a size biased sample from $(V_1, V_2, \ldots)$, and, divided by $n$, also converges to a non-degenerate
distribution on $[0,1]$.


For the second example, we suppose that $n = n_1+n_2$, where $n_2/n \to 1$, $n_1$ is even and $n_1 \to \infty$.
The desired graph can be obtained from a random 2-regular graph with $n-n_1/2$ vertices by
selecting $n_1/2$ degree 2 vertices at random, one after another, and creating
two degree 1 vertices out of each one.  During this procedure, components are chosen in a size-biased fashion
and split uniformly
(except on the first attempt, since they were all cycles to begin with).
The largest component in the resulting graph contains
only $o_p(n)$ vertices, and so only $o_p(n)$ susceptibles are infected if $\nI$ is bounded.

For our third example, we take $n = n_2 + n_4$, where
$n_2/n\to 1$ and $n_4 \to \infty$. Each vertex of degree 4 can be obtained by merging a pair of vertices of degree 2. Analogously to the previous example,
we see there is a unique giant component with $n - o_p(n)$ vertices.
Hence, even if only a single given
vertex is initially infective, then all $n - o_p(n)$ susceptibles
in the giant component succumb to infection \whpx.


\subsection{Vaccination}\label{s:vaccination}

Let us suppose that we vaccinate some susceptible vertices before the epidemic process starts.
The vaccine is assumed perfect, so that a vaccinated vertex never becomes infective.
In particular, vaccinated vertices behave like recovered vertices in the SIR dynamics.
Let us use this fact to analyse degree dependent vaccination strategies by applying
Theorems \ref{t:mI0} and \ref{t:mI0-1}
to a suitably modified degree sequence.

We assume that each initially susceptible vertex
of degree $k \ge 0$ is vaccinated with probability $\pi_k \in [0,1)$, independently of all the others.  
Here are two examples of such strategies.

\smallskip

\emph{Uniform vaccination.} We vaccinate every susceptible vertex with the same probability $\pi_k = v$ for all $k$ and some  $v\in [0,1)$, independently of all the others.  The total number $V$ of vaccinations thus satisfies $V/n_{\mathrm S} \pto v$, using the law of large numbers.

\smallskip
\emph{Edgewise vaccination.} We vaccinate the end point
of each susceptible half edge with probability $v \in [0,1)$, independently of all the other half-edges.
Thus the probability that a degree $k$ susceptible is vaccinated is $\pi_k:= 1 - (1 - v)^k$, and, under our assumptions, the total number
$V$ of vaccinations satisfies $V/n_{\mathrm{S}} \pto \sum_{k=0}^\infty p_k
\pi_k$.

\medskip

These strategies are considered in \cite{BrittonJansonMartinLof}, where their efficacy in a related epidemic
model (equivalent to constant recovery times) is compared, along with two other strategies (uniform
acquaintance vaccination strategy and another edgewise strategy, neither of which can be studied with the present argument).

As noted above, vaccinating a vertex amounts to changing its type from susceptible to recovered.
Let us calculate the (random) number of vertices of each type post-vaccination, and show that
assumptions \ref{d:first}--\ref{d:last} hold for the modified degree sequence.

We add a tilde to our notation for the post-vaccinated epidemic.  Thus $\vnSk$ denotes
the number of degree $k \ge 0$  initially susceptible vertices that remain unvaccinated.
We have $\vnSk \sim  $ Binomial$(n_{\mathrm S,k}, 1 - \pi_k)$, and, by the law of
large numbers,
\[
\vnSk
= \nSk(1 - \pi_k) + o_p(n)
= n \aS p_k(1 - \pi_k) + o_p(n).
\]
Using the uniform summability of $\sum_{k=0}^\infty k \nSk/n$ (see \refR{r:UI})
\[
\vnS := \sum_{k=0}^\infty \vnSk = n \aS \sum_{k=0}^\infty p_k (1 - \pi_k) + o_p(n),
\]
whence
\[
\frac{\vnS}{n} \pto \aS \sum_{k=0}^\infty p_k (1 - \pi_k) =: \vaS > 0.
\]
Furthermore,
\[
\frac{\tilde n_{\mathrm{S},k}}{ \tilde n_{\mathrm{S}} } \pto \frac{p_k(1-\pi_k)}{\sum_{k=0}^\infty p_k (1 - \pi_k)} =: \tilde p_k,
\]
and $\tilde p_1 > 0$ if $p_1 > 0$.   The mean of $(\tilde p_k)_0^\infty$ is
\[
\tilde \lambda := \sum_{k=0}^\infty k \tilde p_k \le \aS\lambda/\vaS < \infty.
\]
By the uniform summability of $\sum_{k=0}^\infty k \nSk/n$,
and the inequality $\vnSk \le \nSk$, we also have
\[
\frac{1}{ \vnS }\sum_{k=0}^\infty k \vnSk \pto \sum_{k=0}^\infty k \tilde p_k.
\]

The initial number $\vnRk = \nRk + (\nSk - \vnSk)$ of degree $k$ vertices that
are either recovered or have been vaccinated satisfies
\[
\vnRk = \nRk + n\aS \pi_k p_k + o_p(n),
\]
and so, again using the uniform summability of $\sum_{k=0}^\infty k \nSk/n$,
\[
\sum_{k=0}^\infty k\vnRk/n\to
\vmR := \mR + \aS \sum_{k=0}^\infty k \pi_k p_k.
\]

The number $\vnIk$ of infective vertices of degree $k$ after the
vaccinations is unchanged, \ie{} $\vnIk = \nIk$. It follows that
\[
 \vaI = \lim_{ n \to \infty } \sum_{k=0}^\infty \vnIk/n = \aI,
\qquad  \vmI = \lim_{ n \to \infty } \sum_{k=0}^\infty k \vnIk/n = \mI.
\]
Similarly, $\vnk=\nk$, and so $\vmX=\mX$. Furthermore,
\begin{equation*}
 \sum_{k=0}^\infty k \vnSk/n
\to
  \vmS=\vaS\tilde\lambda
=\aS \sum_{k=0}^\infty kp_k (1 - \pi_k) =\mS-\aS \sum_{k=0}^\infty k \pi_k p_k.
\end{equation*}

These limits may be assumed almost sure by the Skorokhod coupling lemma.  Hence, we can
apply Theorems \ref{t:mI0} and \ref{t:mI0-1} to the
post-vaccination epidemic with the modified values $\vaS, \vaI, \vaR$,  $\tilde \lambda, \vmI, \vmR$ and $(\tilde p_k)_0^\infty$, with the understanding that $R_t$ and $\XR{t}$
now include vaccinated vertices and half-edges (though these could easily be subtracted off).
The limiting deterministic evolution and final size follow immediately.
Rather than restating the results in full, we simply give criteria for the
vaccination programme
to be successful.

\begin{corollary}
Let us consider the SIR epidemic on the uniform simple graph $\Gr$.
Suppose that \ref{d:first}--\ref{d:last} and \ref{d:sumsquaredi=On} hold.
Suppose further that each initially susceptible vertex of degree $k$ is vaccinated
with probability $\pi_k \in [0,1)$ independently of the others, 
and $\mI = 0$.  Let
\begin{equation}\label{e:vaccR0}
\tRzero \= \left(\frac{\beta}{\rho + \beta}\right)\left(\frac{\aS}{\mX}\right)\sum_{k=0}^\infty k(k-1) p_k (1-\pi_k).
\end{equation}

If\/ $\tRzero \le 1$, then the total number of susceptible vertices that get infected is $o_p(n)$.
If\/ $\tRzero > 1$, then there exists $\delta > 0$ such that at least $\delta n$ susceptibles get infected with probability bounded away zero.

The same result holds for the SIR epidemic on the multigraph $\GrCM$, even
without assumption \ref{d:sumsquaredi=On}.
\end{corollary}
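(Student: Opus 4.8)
The plan is to realise the vaccination as a modification of the initial configuration and then to invoke Theorems~\ref{t:mI0} and~\ref{t:mI0-1} for the vaccinated degree sequence; most of the bookkeeping has already been carried out in the paragraphs preceding the corollary.

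First I would condition on the outcome of the vaccinations. Since a vaccinated vertex behaves exactly like a recovered one, the vaccinated population is simply the SIR epidemic on the same graph $\Gr$ (respectively $\GrCM$) with $\vnSk$ unvaccinated susceptibles of degree $k$, $\vnRk=\nRk+(\nSk-\vnSk)$ recovered-or-vaccinated vertices of degree $k$, $\vnIk=\nIk$, and $\vnk=n_k$. The computations above show that, with probability tending to $1$, this modified sequence satisfies \ref{d:first}--\ref{d:last} with limiting parameters $\vaS>0$, $\vaI=\aI$, $\vaR$, a limiting susceptible degree distribution $(\tilde p_k)_0^\infty$ of finite positive mean, and $\vmI=\mI=0$, $\vmR$, $\vmX=\mX$; moreover \ref{d:sumsquaredi=On}, when assumed, is inherited because $\vnk=n_k$. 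By the Skorokhod coupling lemma I may assume all these convergences hold almost surely, so that for almost every vaccination outcome the hypotheses of Theorems~\ref{t:mI0} and~\ref{t:mI0-1} hold (together with the standing assumption of Theorem~\ref{t:mI0} that there is initially at least one infective vertex of positive degree — if there is none, no susceptible is ever infected and the corollary is trivially true).

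Next I would identify the threshold. Substituting the modified parameters into \eqref{e:R0} and using the identities $\vaS\tilde p_k=\aS p_k(1-\pi_k)$ and $\vmX=\mX$ established above gives
\[
\Bigpar{\frac{\beta}{\rho+\beta}}\Bigpar{\frac{\vaS}{\vmX}}\sum_{k=0}^\infty k(k-1)\tilde p_k
=\Bigpar{\frac{\beta}{\rho+\beta}}\Bigpar{\frac{\aS}{\mX}}\sum_{k=0}^\infty k(k-1)p_k(1-\pi_k)=\tRzero,
\]
so $\tRzero$ is exactly the basic reproductive ratio of the vaccinated epidemic. If $\tRzero\le1$, Theorem~\ref{t:mI0}\ref{t:mI0.i} for the modified sequence gives, conditionally on the (almost sure) vaccination event, that the number $\vnS-\Sv\infty$ of susceptibles ever infected is $\op(n)$; removing the conditioning by dominated convergence (each conditional probability $\Prob(\vnS-\Sv\infty>\eps n\mid\text{vaccinations})$ is bounded by $1$ and tends to $0$) yields the unconditional $\op(n)$ statement. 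If $\tRzero>1$, Theorem~\ref{t:mI0}\ref{t:mI0.ii} supplies a unique $\tilde\theta_\infty\in(0,1)$ at which the tilde analogue of $\fXI$ vanishes and shows that, conditionally on the large-outbreak event $\{\iUp<\infty\}$, the final fraction of susceptibles is $\tilde v_{\mathrm S}(\tilde\theta_\infty)+\op(1)$, where $\tilde v_{\mathrm S}$ is the tilde analogue of $\fSv$. Since $\tilde v_{\mathrm S}$ is strictly increasing on $(0,1]$ and $\tilde\theta_\infty<1$, the constant $\delta\=\tfrac12\bigl(\vaS-\tilde v_{\mathrm S}(\tilde\theta_\infty)\bigr)$ is positive, and, conditionally on $\{\iUp<\infty\}$, at least $\delta n$ susceptibles get infected with probability tending to $1$. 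Finally, Theorem~\ref{t:mI0}\ref{t:mI0.ii.c} gives $\liminf_n\Prob(\iUp<\infty\mid\text{vaccinations})>0$ for almost every vaccination outcome, so Fatou's lemma yields $\liminf_n\Prob(\iUp<\infty)>0$, which finishes this case. The statement for the multigraph $\GrCM$, and its validity without \ref{d:sumsquaredi=On}, follow in the same way from the corresponding clauses of Theorem~\ref{t:mI0}.

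The only genuine obstacle — and it is a minor one — is handling the extra layer of randomness coming from the vaccinations: one conditions on the vaccination outcome, checks (as already done) that the regularity hypotheses survive this conditioning with probability tending to $1$, and then passes back through the conditioning using dominated convergence for the subcritical conclusion and Fatou's lemma for the supercritical one. Everything else is a direct substitution into Theorems~\ref{t:mI0} and~\ref{t:mI0-1}.
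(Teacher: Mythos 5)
Your proposal is correct and follows essentially the same route as the paper: verify \ref{d:first}--\ref{d:last} (and \ref{d:sumsquaredi=On} when assumed) for the post-vaccination sequence, make the limits almost sure via the Skorokhod coupling lemma, and apply Theorems~\ref{t:mI0} and~\ref{t:mI0-1} with the modified parameters, whose basic reproductive ratio is exactly $\tRzero$. The only quibble is your parenthetical claim that the corollary is ``trivially true'' when no initially infective vertex has positive degree --- in that case the supercritical conclusion would in fact fail --- but this degenerate case is excluded because the corollary inherits the hypotheses of Theorem~\ref{t:mI0}, under which your argument matches the paper's.
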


\section{The time-changed epidemic}\label{s:fastepidemic}
In this and the next two sections, we consider the SIR epidemic on the random multigraph
$\GrCM$; in \refS{s:simple} we transfer the results to the simple random
graph $\Gr$.

A key step in the proof is to alter the speed of the process by multiplying each transition
rate by a constant depending on the current state. The constant is chosen so that
each free susceptible half-edge gets infected at unit rate (or, equivalently, so that the infection pressure on the population is 1).  Specifically, if there
are $\xI \ge 1$ free infective half-edges, and a total of $x$ free
half-edges of any type, we multiply all
transition rates out of such a state by $(x-1)/\beta\xI > 0$.  Thus each infective vertex
recovers at rate $\rho(x-1)/\beta\xI$, and each free infective
half-edge pairs off at rate $(x-1)/\xI$. This change of rates accelerates
the epidemic in its `slow' phases, when the number of free infective
half-edges is $o(n)$ (beginning and end of the epidemic).
Later, we will invert the time change to obtain the original process.

We use Greek letters ($\tau$ and $\sigma$) for the time variable of the altered process as a reminder of the rate modification. The notation for the numbers of half-edges and vertices of each type in the modified process follows that for the original process, except that we superscript each variable with a prime.
For example, $\XItt{\tau}$ denotes the number of infective half-edges at time $\tau \ge 0$
in the modified process.

Let
\begin{equation}\label{e:iEndtt}
 \iEndtt \= \inf\{\tau \ge 0  : \XItt{\tau} = 0 \}
\end{equation}
be the time at which the modified process stops, when there are no free infective half-edges.

\begin{lemma}\label{l:LLN}
Suppose that~\ref{d:alphaprops}--\ref{d:SI2ndmoment} hold.
Fix $\tau_1 > 0$.  Then, uniformly over $[0, \tau_1 \wedge \iEndtt]$,
\begin{align}
\label{e:SvttLLN}
\Svtt{\tau} / n &\pto \fSv(e^{-\tau}),\\
\label{e:XSttLLN}
\XStt{\tau} /n &\pto \fXS(e^{-\tau} ),\\
\label{e:XttLLN}
\Xtt{\tau}/n &\pto \fXX(e^{-\tau} ),\\
\label{e:XRttLLN}
\XRtt{\tau}/n &\pto \fXR(e^{-\tau} ),
\intertext{and, consequently,}
\label{e:XIttLLN}
\XItt{\tau}/n &\pto \fXI(e^{-\tau} ).
\end{align}
\end{lemma}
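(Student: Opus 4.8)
The plan is to identify the modified process as a density-dependent Markov jump process and apply a law-of-large-numbers (functional) argument in the spirit of Kurtz / Darling--Norris, adapted here via the martingale method already used in \cite{JansonLuczakkCore,JansonLuczakGiantCpt}. First I would write down the generator of the accelerated process. In the modified dynamics, each free susceptible half-edge is infected at unit rate, so the instantaneous rate at which susceptible half-edges are consumed is exactly $\XStt{\tau}$; when a susceptible half-edge is hit, a vertex of degree $k$ becomes infective with probability proportional to $k\Svktt{\tau}$ among the free susceptible half-edges. Meanwhile a free half-edge of \emph{any} type is paired off at rate (total free half-edges)$\cdot\,$(something), so the total $\Xtt{\tau}$ decreases at rate $\approx 2\XStt{\tau}$ (two half-edges vanish per edge, and the `driving' half-edge is the infective one which pairs with a uniformly random partner). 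Recoveries happen at rate $\rho(x-1)/(\beta\xI)$ per infective vertex, contributing to $\XRtt{\tau}$ at rate $\approx (\rho/\beta)\XItt{\tau}$. Writing everything in terms of the scaled variables $\bar s=\Svtt{\tau}/n$, $\bar x_S=\XStt{\tau}/n$, $\bar x=\Xtt{\tau}/n$, $\bar x_R=\XRtt{\tau}/n$, these rates are Lipschitz functions of the state (on the region where $\bar x$ is bounded below), so the candidate ODE system is
\begin{equation}
\dot{\bar x}_S = -\tfrac{d}{d\tau}\text{(consumed)}, \qquad \dot{\bar x} = -2\bar x_S, \qquad \dot{\bar x}_R = \tfrac{\rho}{\beta}\bar x_I,
\end{equation}
together with the bookkeeping identity $\bar x_I=\bar x-\bar x_S-\bar x_R$ and a corresponding equation for $\bar s$ tracking degree-$k$ susceptible vertices. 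The key observation is that with $\theta=e^{-\tau}$ one checks directly that $\fXS(e^{-\tau})$, $\fXX(e^{-\tau})=\mX e^{-2\tau}$, $\fXR(e^{-\tau})$, $\fSv(e^{-\tau})$ solve this system with the correct initial conditions \eqref{e:fX1}: e.g.\ $\frac{d}{d\tau}\mX e^{-2\tau}=-2\mX e^{-2\tau}$ and $-2\fXS\ne$ that in general, so the right driving term must be matched carefully --- this is where the factor accounting for which half-edge is the `active' one enters. (The cleanest route is: a uniformly random free half-edge is paired at twice the rate of a susceptible one, giving survival probability $\theta_\tau^2=e^{-2\tau}$, hence $\fXX(e^{-\tau})$; a susceptible half-edge survives with probability $e^{-\tau}$, hence $\fXS(e^{-\tau})$; and $\fXR$ is obtained by integrating the recovery contribution.)

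Second, I would make the martingale estimates rigorous. For each of $\Svtt{\tau}$, $\XStt{\tau}$, $\Xtt{\tau}$, $\XRtt{\tau}$, write the Doob--Meyer decomposition: the process equals its initial value plus the integral of its drift plus a martingale. The drift integrand is, up to $o_p(1)$ errors, the corresponding Lipschitz vector field evaluated at the current scaled state; the correction terms involve things like $\frac{1}{\Xtt{\tau}}$ and sums $\sum_k k^2 \Svktt{\tau}$, which are controlled using \ref{d:SI2ndmoment} and \refR{r:UI} (uniform summability of $\sum_k k n_{S,k}/n$ --- note the recovered vertices play only a passive role, so no second-moment control on them is needed, matching the hypotheses of the lemma which only assume \ref{d:alphaprops}--\ref{d:SI2ndmoment}). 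The martingale part has quadratic variation $O_p(n)$ over $[0,\tau_1]$ since jumps are of size $O(\dmax)=o(n)$ and there are $O(n)$ of them up to time $\tau_1\wedge\iEndtt$, so by Doob's inequality the martingale is $o_p(n)$ uniformly. One then has, on $[0,\tau_1\wedge\iEndtt]$, an integral (Gronwall-type) inequality for the discrepancy between the scaled process and the ODE solution, valid as long as $\Xtt{\tau}/n$ stays bounded away from $0$; since $\fXX(e^{-\tau})=\mX e^{-2\tau}\ge\mX e^{-2\tau_1}>0$ on this interval, a standard stopping-time/bootstrap argument closes the loop and yields uniform convergence $\Svtt{\tau}/n\pto\fSv(e^{-\tau})$, etc. The remaining convergences follow: \eqref{e:XIttLLN} is immediate from $\XItt{\tau}=\Xtt{\tau}-\XStt{\tau}-\XRtt{\tau}$ and $\fXI=\fXX-\fXS-\fXR$.

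The degree-resolved bookkeeping for $\Svtt{\tau}$ requires a little care, since $\Svtt{\tau}=\sum_k\Svktt{\tau}$ is an infinite sum; I would handle this by proving the convergence $\Svktt{\tau}/n\pto\aS p_k e^{-k\tau}$ for each fixed $k$ (finitely many coupled equations, each with bounded jumps), and then upgrading to the sum using the uniform summability from \refR{r:UI} to control the tail $\sum_{k>K}$ uniformly in $\tau\in[0,\tau_1]$ and in $n$. The truncation estimate is where uniform summability is exactly the right hypothesis. I expect the main obstacle to be the careful verification that the drift of $\Xtt{\tau}$ (the total free half-edge count) is $-2\XStt{\tau}+o_p(n)$ rather than something with a spurious extra term: one must account for the possibility that the partner of an active infective half-edge is itself infective (an `infective--infective' pairing removes two infective half-edges and no susceptible one) versus susceptible versus recovered, and check that the \emph{total} count still decreases at the stated rate regardless of the partner's type, while the susceptible-specific count $\XStt{\tau}$ decreases at rate $\XStt{\tau}(1+\XStt{\tau}/(\Xtt{\tau}-1))$ --- i.e.\ both from being the active half-edge and from being chosen as partner. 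Reconciling these bookkeeping rates with the closed-form solutions $\theta^k$, $\theta$, $\theta^2$ is the crux; everything else is the routine martingale machinery. The technical lemma in \refApp{Atimechange} presumably supplies the needed control on $\iEndtt$ and the time change, which I would invoke rather than reprove.
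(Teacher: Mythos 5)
There is a genuine gap: the drift bookkeeping at the heart of your proposal is wrong, and the ODE system you write down is not solved by the limit functions in the lemma. In the modified time scale the total rate of pairing events is $\Xtt{\tau}-1$ (each free infective half-edge pairs at rate $(\Xtt{\tau}-1)/\XItt{\tau}$), and every pairing removes two half-edges \emph{whatever} the partner's type; so the drift of $\Xtt{\tau}$ is exactly $-2(\Xtt{\tau}-1)$, not $-2\XStt{\tau}$ as you assert (twice, including in your closing "crux" remark). Likewise your $\dot{\bar x}_R=(\rho/\beta)\bar x_I$ is wrong on two counts: a recovering infective vertex with $j$ free half-edges adds $j$ to $\XRtt{}$, and summing over vertices gives a drift contribution $\rho\XItt{\tau}\cdot(\Xtt{\tau}-1)/(\beta\XItt{\tau})=(\rho/\beta)(\Xtt{\tau}-1)$, not $(\rho/\beta)\XItt{\tau}$; and you have dropped the $-\XRtt{\tau}$ term coming from recovered half-edges being chosen as partners. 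Finally, susceptible half-edges are never the "active" half-edge (only infective ones initiate pairings), so your rate $\XStt{\tau}\bigl(1+\XStt{\tau}/(\Xtt{\tau}-1)\bigr)$ for the decrease of $\XStt{}$ is incorrect; the true drift of $\XStt{}$ is $-\sum_k k^2\Svktt{\tau}$ (when a degree-$k$ susceptible is hit, all $k$ of its half-edges leave the susceptible pool), which is not a function of $\XStt{}$ alone --- this is precisely why one must not seek an autonomous equation for $\XStt{}$, and also why your one-line heuristic "a susceptible half-edge survives with probability $e^{-\tau}$, hence $\fXS(e^{-\tau})$" does not justify $\fXS(e^{-\tau})=\aS\sum_k kp_ke^{-k\tau}$. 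You correctly sense that your rates do not reconcile with the closed forms $\theta^k,\theta^2,\dots$, but you leave that unresolved, and it cannot be patched by "matching the active half-edge factor": the rates themselves must be corrected.

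Once the rates are corrected, the problem becomes much easier than the density-dependent/Kurtz framework you invoke: the time change is designed so that the drifts are \emph{exactly} linear and essentially decoupled, namely $-k\Svktt{\tau}$ for each $\Svktt{}$, $-2(\Xtt{\tau}-1)$ for $\Xtt{}$, and $-\XRtt{\tau}+(\rho/\beta)(\Xtt{\tau}-1)$ for $\XRtt{}$ (the last using the already-proved convergence of $\Xtt{}$), so one applies Gronwall coordinatewise with no Lipschitz vector field, no lower bound on $\Xtt{\tau}/n$, and no bootstrap/stopping argument. The parts of your plan that do survive are exactly the paper's: per-degree analysis of $\Svktt{}$ with limit $\aS p_ke^{-k\tau}$, Doob's $L^2$ inequality on the compensating martingales, truncation of $\sum_k k\Svktt{}$ via the uniform summability of Remark~\ref{r:UI}, the use of \ref{d:SI2ndmoment} to bound the quadratic variation of the recovery martingale (note your "$O(n)$ jumps of size $o(n)$ gives QV $O_p(n)$" is not a valid estimate; what one actually gets and needs is $o(n^2)$, via \eqref{e:SI2ndmoment}), and $\XItt{}=\Xtt{}-\XStt{}-\XRtt{}$ at the end. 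Two smaller corrections: \refL{l:AddFuncTT} only identifies the law of the time-changed chain; it gives no control on $\iEndtt$, and none is needed for this lemma since all statements are restricted to $[0,\tau_1\wedge\iEndtt]$.
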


\begin{proof}
For each $k \in {\mathbb Z}^+$, let $\Svktt{\tau}$ denote the number of susceptible vertices with $k$ half-edges at time $\tau \ge 0$ (we omit the qualifier
$\tau \le \iEndtt$ throughout the proof; any occurrence of `$\tau$' is understood to mean `$\tau \wedge \iEndtt$').
Thus
$\Svtt{\tau} = \sum_{k = 0}^\infty  \Svktt{\tau}$ and
$\XStt{\tau} = \sum_{k = 0}^\infty k \Svktt{\tau}$ for each $\tau$. Also,
$\Svktt{0} = \nSk$, for each $k$.

For each $k$, the only jumps in $\Svktt{\tau}$
are decrements by $1$, and these occur when an infective half-edge pairs off and chooses
one of the 
half-edges belonging to a susceptible vertex of degree $k$.
Hence, with the modified transition rates,
\begin{align}
\dd\Svktt{\tau} &= -\beta \XItt{\tau} \parfrac{\Xtt{\tau} - 1}{\beta\XItt{\tau}} \parfrac{k\Svktt{\tau}}{\Xtt{\tau} - 1}d\tau + dM_{\mathrm{S},\tau}(k) \nonumber \\
& = -k\Svktt{\tau}\dd\tau + \dd M_{\mathrm{S},\tau}(k), \label{e:dSvttk}
\end{align}
where $(M_{\mathrm{S},\tau}(k))_{\tau \ge 0}$ is a martingale starting from
$M_{\mathrm{S},0}(k) = 0$
\cite[Proposition 1.7]{EthierKurtz}.

The differential notation in \eqref{e:dSvttk} means that
\begin{equation}
 \label{e:SvkIntForm}
 \Svktt{\tau} = \Svktt{0} -k \int_0^\tau \Svktt{\sigma}\dd\sigma + M_{\mathrm{S},\tau}(k).
\end{equation}
Since $\Svktt{0} = \nSk$, it follows that
\begin{equation}\label{e:SvkIntDev}
  \begin{split}
 \abs{\Svktt{\tau} - \nSk e^{-k\tau} }
& = \biggabs{\Svktt{\tau} - \nSk
\left(1 - k\int_0^\tau  e^{-k\sigma}\dd\sigma\right) } \\
& = \biggabs{
 \int_0^\tau k\Bigpar{-\Svktt{\sigma} + \nSk e^{-k\sigma} }\dd\sigma +
 {M_{\mathrm{S},\tau}(k)} }\\
 & \le k \int_0^\tau \Bigabs{\Svktt{\sigma} - \nSk e^{-k\sigma} }\dd\sigma +
 \abs{M_{\mathrm{S},\tau}(k)}.
  \end{split}
\end{equation}
Consequently, using Gronwall's inequality (for positive bounded functions) \cite[Appendix \S{1}]{RevuzYor},
\begin{align*}
  \sup_{\tau \le \tau_1} \abs{\Svktt{\tau} - \nSk e^{-k\tau} } &\le k \int_0^{\tau_1} \sup_{\tau \le \sigma} \abs{\Svktt{\tau} - \nSk e^{-k\tau} }\dd\sigma +\sup_{\tau \le \tau_1}\abs{M_{\mathrm{S},\tau}(k)} \\
  & \le e^{k\tau_1} \sup_{\tau \le \tau_1}\abs{M_{\mathrm{S},\tau}(k)},
\end{align*}
and it follows that
\begin{equation}
  \label{e:maj}
 \sup_{\tau \le \tau_1} \bigabs{\Svktt{\tau}/n - \aS p_k e^{-k\tau} } \le \abs{ \nSk/n - \aS p_k } + e^{k\tau_1} \sup_{\tau \le \tau_1}\abs{M_{\mathrm{S},\tau}(k)}/n.
\end{equation}
The first term on the right goes to zero as $n\to \infty$ by \ref{d:alphaprops} and \ref{d:asympsuscdist}.  Let us show that $\sup_{\tau \le \tau_1}\abs{M_{\mathrm{S},\tau}(k)}/n \pto 0$.

The martingale $M_{\mathrm{S},\tau}(k)$ is
right continuous and has left limits (c\`adl\`ag), and it is also of finite variation.
The quadratic variation process of such a martingale is the running sum of its
(countably many) squared jumps \cite[Theorem 26.6]{Kallenberg}.
The jumps in $M_{\mathrm{S},\tau}(k)$ are by \eqref{e:SvkIntForm} the same
as the jumps in $\Svktt{\tau}$.
Each jump in $\Svktt{\tau}$ is a decrement by one, and there are at most $\Svktt{0}$ such jumps.  Hence
the quadratic variation $[M_{\mathrm{S}}(k)]_\tau$ of $M_{\mathrm{S},\tau}(k)$ satisfies
\[
 [M_{\mathrm{S}}(k)]_\tau = \sum_{0 \le \sigma \le \tau} (\Delta M_{\mathrm{S},\sigma}(k))^2 \le \Svktt{0} =
 \nSk
\le n,
\]
for any $\tau \ge 0$.
In particular, $\E[M_{\mathrm{S}}(k)]_{\tau}<\infty$
for every $\tau\ge0$,
so $M_{\mathrm{S},\tau}(k)$ is square integrable
and
$\E M_{\mathrm{S},\tau}(k)^2 = \E[M_{\mathrm{S}}(k)]_{\tau}$
\cite[Corollary 3 after Theorem II.6.27, p.~73]{Protter}.
Hence,
 Doob's $L^2$-inequality \cite[Proposition 7.16]{Kallenberg} yields
\begin{equation}\label{e:SvkDoobIneq}
 \E \sup_{\tau \le \tau_1} \abs{M_{\mathrm{S},\tau}(k)}^2
\le 4\E M_{\mathrm{S},\tau_1}(k)^2
= 4\E[M_{\mathrm{S}}(k)]_{\tau_1} = O(n).
\end{equation}
It follows that $\sup_{\tau \le \tau_1} \abs{M_{\mathrm{S},\tau}(k)} = \op(n)$, and so, by \eqref{e:maj}, for each $k$,
\begin{equation}\label{e:SvkttLLN}
 \sup_{\tau \le \tau_1} \bigabs{\Svktt{\tau}/n - \aS p_k e^{-k\tau} } \pto 0.
\end{equation}


Let $\eps  > 0$.  By
assumptions \ref{d:alphaprops}--\ref{d:suscmeanUS},
there exists $K > 0$ such that $\sum_{k = K+1}^\infty k \nSk /n < \epsilon$
for any $n$, see \refR{r:UI}.  Further, $K$ can be chosen large enough that $\sum_{k = K+1}^\infty k p_k < \eps$.
Consequently,
\begin{equation}\label{e:XSttUIbd}
  \begin{split}
\sup_{\tau \le \tau_1} \bigabs{ \XStt{\tau} /n - \fXS(e^{-\tau}) }
& = \sup_{\tau \le \tau_1} \lrabs{ \sum_{k=0}^\infty k \Svktt{\tau} /n - \aS \sum_{k=0}^\infty k p_k e^{-\tau k} } \\
& \le \sum_{k =0}^K k \sup_{\tau \le \tau_1} \left| \Svktt{\tau} /n - \aS
p_k e^{-k\tau}  \right|
+ \sum_{k = K+1}^\infty k\left(\nSk/n + p_k\right)\\
& \le \sum_{k =0}^K k \sup_{\tau \le \tau_1} \left| \Svktt{\tau} /n - \aS
p_k e^{-k\tau}  \right| + 2\epsilon,
  \end{split}
\end{equation}
and the same bound applies to $\sup_{\tau \le \tau_1} \abs{ \Svtt{\tau} /n - \fSv(e^{-\tau}) }$.
The finite sum in the last line of \eqref{e:XSttUIbd} tends to zero in probability, by \eqref{e:SvkttLLN}.  This completes our proof of \eqref{e:SvttLLN}
and \eqref{e:XSttLLN}.

We prove~\eqref{e:XttLLN} and \eqref{e:XRttLLN} similarly.  The total
number of free half-edges decrements by $2$ whenever an infective
half-edge pairs off.  Then
\begin{equation}
  \begin{split}
 d\Xtt{\tau} &
= -2\beta \XItt{\tau} \parfrac{\Xtt{\tau} - 1}{\beta\XItt{\tau}}d\tau
+ dM_{\mathrm{X},\tau}
= -2 (\Xtt{\tau} - 1)\dd\tau + dM_{\mathrm{X},\tau},\label{e:dXtt}	
  \end{split}
\end{equation}
where $(M_{\mathrm{X},\tau})_{\tau \ge 0}$ is a c\`adl\`ag martingale
satisfying $M_{\mathrm{X},0} = 0$.
Writing the equation in integrated form
and proceeding as in \eqref{e:SvkIntDev},
\[
 \bigabs{\Xtt{\tau} - \Xtt{0} e^{-2 \tau} } \le 2 \int_0^\tau  \bigabs{\Xtt{\sigma} - \Xtt{0} e^{-2 \sigma} }\dd\sigma + 2\tau + \abs{M_{\mathrm{X},\tau}}.
\]
Gronwall's inequality yields
\[
 \sup_{\tau \le \tau_1}\bigabs{\Xtt{\tau} - \Xtt{0} e^{-2 \tau} } \le e^{2 \tau_1} \left(2 \tau_1 + \sup_{\tau \le \tau_1}\abs{M_{\mathrm{X},\tau}}\right),
\]
and so, 
\[
 \sup_{\tau \le \tau_1} \bigabs{\Xtt{\tau}/n - \fXX(e^{-\tau}) } \le \bigabs{\Xtt{0}/n - \mX} + e^{2 \tau_1} \left(2\tau_1 + \sup_{\tau \le \tau_1}\abs{M_{\mathrm{X},\tau}}\right)/n.
\]

By \ref{d:meanIRconv} $\Xtt{0}/n = \sum_{k=0}^\infty k \nk/n \to \mX$, and so the first term above converges to 0.
To estimate the martingale term, note that the jumps in $M_{\mathrm{X},\tau}$ are the same as the jumps in $\Xtt{\tau}$. At each jump, $\Xtt{\tau}$ decreases by $2$, and there is at most one jump in $\Xtt{\tau}$ for
each of the $\Xtt{0}$ initial half-edges.  Hence, the quadratic variation $[M_{\mathrm{X}}]_{\tau_1}$ satisfies
\[
 [M_{\mathrm{X}}]_{\tau_1} \le \sum_{\sigma \ge 0} (\Delta M_{\mathrm{X},\sigma})^2 \le 4 \Xtt{0} = O(n).
\]
Proceeding as in \eqref{e:SvkDoobIneq}, we see that $\sup_{\tau \le \tau_1}\abs{M_{\mathrm{X},\tau}} = \op(n)$, and \eqref{e:XttLLN} follows.

Now, the number of free recovered half-edges $\XRtt{\tau}$
decreases by 1 when an infective half-edge is paired with a recovered half-edge, and increases by $k \ge 0$ when an infective
vertex with $k$ free half-edges recovers.  With the modified rates, we have
\begin{align}
d\XRtt{\tau} &= \left(-\beta \XItt{\tau} \parfrac{\XRtt{\tau}}{\Xtt{\tau} - 1}  + \rho \XItt{\tau}\right)\parfrac{\Xtt{\tau} - 1}{\beta\XItt{\tau}}d\tau + dM_{\mathrm{R},\tau}\nonumber\\
 & = -\XRtt{\tau}\dd\tau + \rho \beta^{-1} (\Xtt{\tau} - 1)\dd\tau  + dM_{\mathrm{R},\tau},\label{e:dXRtt}
\end{align}
where $(M_{\mathrm{R},\tau})_{\tau \ge 0}$ is a c\`adl\`ag martingale with
$M_{\mathrm{R},0} = 0$.

Differentiating the expression for $\fXR$ in \eqref{e:fXR} shows that
\[
 \frac{d}{d\sigma} \fXR(e^{-\sigma}) = -\fXR(e^{-\sigma}) + \rho\beta^{-1}\fXX(e^{-\sigma}).
\]
Since $\fXR(1) = \mR$, integrating, subtracting the integrated form
of \eqref{e:dXRtt} divided by $n$,
and taking the absolute value yields
\begin{equation}\label{e:XRttIntForm}
 \bigabs{\XRtt{\tau}/n - \fXR(e^{-\tau})} \le \int_0^\tau  \bigabs{\XRtt{\sigma}/n - \fXR(e^{-\sigma})}\dd\sigma + E_\tau,
\end{equation}
where the absolute error term $E_\tau$ is given by
\[
  E_\tau \= \bigabs{\XRtt{0}/n -\mR} + \frac{\rho}{\beta}\int_0^\tau  \bigabs{\Xtt{\sigma}/n - \fXX(e^{-\sigma})}\dd\sigma + \frac{\rho\tau}{n \beta} + \frac{\abs{M_{\mathrm{R},\tau}}}{n}.
\]

Let us show that $\sup_{\tau \le \tau_1} \abs{E_\tau} \pto 0$; then~\eqref{e:XRttLLN} follows from~\eqref{e:XRttIntForm} by applying Gronwall's inequality.

First of all, $\XRtt{0}/n = \sum_{k = 0}^\infty k \nRk/n \to \mR$ by \ref{d:meanIRconv}.
The integrand in the second term tends to zero uniformly by the convergence \eqref{e:XttLLN} of $\Xtt{\sigma}$ already proven.
Finally, the jumps in $M_{\mathrm{R},\tau}$ are due to either an infective vertex recovering
(which happens at most once for each vertex, and only for vertices that were initially infective or initially susceptible) or an infective half-edge
pairing to a recovered half-edge (which happens at most once for each half-edge). It follows that
\[
[M_{\mathrm{R}}]_{\tau_1}  \le
\sum_{\sigma \ge 0}(\Delta M_{\mathrm{R},\sigma})^2
 \le X_0 + \sum_{k=0}^\infty k^2 (\nSk + \nIk) = o(n^2),
\]
by \ref{d:SI2ndmoment} and \eqref{e:SI2ndmoment},
and so $\sup_{\tau \le \tau_1} \abs{M_{\mathrm{R},\tau}} = \op(n)$.

Finally, the convergence \eqref{e:XIttLLN} of $\XItt{\tau}/n = \Xtt{\tau}/n - \XStt{\tau}/n - \XRtt{\tau}/n$ follows by applying the triangle inequality.
\end{proof}

\begin{remark}
Any given susceptible half-edge gets infected at unit rate in the modified process, and so
each initial degree $k$ susceptible gets infected after an $\Exp(k)$ time, independently of
all the other susceptibles.  This observation can be used to give an alternative proof of \eqref{e:SvkttLLN} 
using the Glivenko--Cantelli lemma for convergence of empirical distributions, as in \cite{JansonLuczakGiantCpt}.  Using the Glivenko--Cantelli lemma would allow us to take
$\tau_1 = \infty$ in~\eqref{e:SvttLLN}--\eqref{e:XttLLN}.
However, this strengthening would give no extra benefit, since $\iEndtt$ is bounded \whp, as shown in \refS{s:proofs} below.  

Further, it is possible to obtain quantitative statements of convergence for
$\sup_{\tau \le \tau_1}\abs{M_{\mathrm{S},\tau}}/n$, $\sup_{\tau \le \tau_1}\abs{M_{\mathrm{X},\tau}}/n$ and $\sup_{\tau \le \tau_1}\abs{M_{\mathrm{R},\tau}}/n$
using techniques such as those in~\cite{NorrisDarling}.
\end{remark}

\subsection{Inverting the rate change to recover the original process}\label{s:invTT}

To close this section, we explain how to rescale time in order to obtain the original process.
To this end, we define 
\begin{equation}\label{e:AddFunc}
 A_\tau = \int_0^\tau \frac{1}{\beta} \parfrac{\Xtt{\sigma} - 1}{\XItt{\sigma}}d\sigma, \qquad \tau \ge 0,
\end{equation}
where we regard the bracketed term in the integrand as being equal to $1/2$
if $\XItt{\sigma} = 0$, \ie{} if $\sigma \ge \iEndtt$
 (till then, the bracketed term is at least $1/2$, since $\Xtt{\sigma} \ge
\XItt{\sigma}$, and, if $\XItt{\sigma} = 1$, then $\Xtt{\sigma} \ge 2$).
Thus $A$ is strictly increasing and continuous.  We let $\tau(t):[0,\infty) \to [0, \infty)$ be
its strictly increasing continuous inverse, so $A_{\tau(t)} = t$ for any $t \ge 0$.

The processes in their original time scaling can then be realised as
\begin{align}\label{e:realtimeprocs}
\Sv{t} = \Svtt{\tau(t)},&&
\Iv{t} = \Ivtt{\tau(t)}, &&
\Rv{t} = \Rvtt{\tau(t)},&&
\ldots,  &&&
t \ge 0,
\end{align}
by applying \refL{l:AddFuncTT} to the underlying Markov evolution of the
epidemic and graph dynamics.  Since the epidemic stops when we run out of free infective half-edges, it makes no difference to~(\ref{e:realtimeprocs})
if we replace $\tau(t)$ by
$\xtau(t) \= \tau(t)\wedge\iEndtt$ above; 
this will be convenient for the proofs below.

\section{Proofs for multigraphs: Theorems \ref{t:mIg0} and  \ref{t:mI0}}\label{s:proofs}

We continue to study the SIR epidemic  on the random multigraph $\GrCM$.
We assume  \ref{d:first}--\ref{d:last}, unless otherwise stated.
For simplicity,
we also assume \ref{d:sumsquaredi=On}, and leave the minor modifications
in the general case to \refSS{ss:nosecondmoment}.
(We are mainly interested in the simple random
graph $\Gr$, where we have to assume \ref{d:sumsquaredi=On}.)

We begin with the (sub)critical regime (part \ref{t:mI0.i} of \refT{t:mI0}).

\subsection{Subcriticality: proof of \refT{t:mI0} part \ref{t:mI0.i}}

Suppose that the hypotheses $\Rzero \le 1$ and $\mI = 0$ are satisfied. We must show
only $\op(n)$ susceptibles ever get infected, and  it is sufficient to prove this for the modified epidemic studied in \refS{s:fastepidemic}.
The key step is proving that the modified epidemic dies almost instantly, \ie{} $\iEndtt \pto 0$.

For this purpose, we first show that $\fXI(\theta) < 0$ for $\theta \in (0,1)$.  It is enough to consider
\begin{equation}\label{e:fXIthetaOvertheta}
h(\theta) \= \fXI(\theta)/\theta = \mX \theta -\aS \sum_{k = 0}^\infty k \theta^{k-1} p_k - \mR - \rho \mX(1- \theta)/\beta.
\end{equation}
By assumption, $h(1) = \fXI(1) = \mI = 0$, see \eqref{e:fX1}.
Furthermore,
\begin{equation}
 \label{e:h0<0}
 h(0) = - \aS p_1 - \mR - \rho \mX/\beta < 0,
\end{equation}
by \ref{d:p1orrhopos}.

Differentiating $h$ and substituting the identity
$\beta \aS \sum_{k=0}^\infty k(k-1)p_k = \mX(\rho + \beta)\Rzero$ yields
\begin{equation}
  \label{e:sture}
  \begin{split}
\beta h'(\theta) & = (\rho + \beta) \mX - \beta \aS \sum_{k = 0}^\infty k(k-1) \theta^{k-2} p_k \\
 & \ge (\rho + \beta) \mX ( 1 - \Rzero) \\
 & \ge 0,
  \end{split}
\end{equation}
for $\theta\in(0,1)$.
If $\sum_{k = 3}^\infty p_k > 0$ and $\theta  < 1$, then there is
strict inequality going from the first to second line in \eqref{e:sture},
and thus $h'(\theta)>0$.
If $\sum_{k = 3}^\infty p_k = 0$,
then $h$ is linear and  $h(0)<0$ by \eqref{e:h0<0}.
In any case,
recalling  that $h(1)=0$, we obtain
$h(\theta) < 0$ for $\theta \in(0,1)$.

We now take $\eps > 0$ and apply \refL{l:LLN} with $\tau_1 = \eps$.
It follows from \eqref{e:XIttLLN} that
\begin{equation}\label{e:earlyXIbd}
\sup_{\tau \le \iEndtt \wedge \eps} \bigabs{ \XItt{\tau}/n - \fXI(e^{-\tau}) } < \abs{\fXI(e^{-\eps})}/2
\end{equation}
\whpx.  On the event that inequality~(\ref{e:earlyXIbd}) holds, we have $\iEndtt < \eps$; otherwise the left hand side of \eqref{e:earlyXIbd} is at least $\abs{\fXI(e^{-\eps})}$,
since $\XItt{\eps} \ge 0$ and $\fXI(e^{-\eps})<  0$.
Hence \whp{} $\iEndtt<\eps$.

It follows that $\iEndtt \pto 0$, and,
by \eqref{e:SvttLLN}, the number $\nS - \Svtt{\iEndtt}$ of susceptibles that ever get infected
satisfies
\begin{equation}\label{e:Svsmall}
 (\nS - \Svtt{\iEndtt})/n \pto \aS  - \fSv(e^{-0}) = 0. 
\end{equation}

\begin{remark} \label{r:h=0}
 If \ref{d:p1orrhopos} does not hold, then $h(0) = 0=h(1)$.
By \eqref{e:sture}, then $h(\theta)=0$ for every $\theta$. This happens only
in the case $p_0+p_2=1$. (In this case \eqref{e:R0} yields $\Rzero = 1$.)
\refL{l:LLN} is still valid, but it is no longer true in general that
$\iEndtt\pto0$. For example, if all vertices have degree 2 and there is
initially a single infective vertex, then there will be two free infective
half-edges until one pairs off with the other, and it is easy to see that
$\iEndtt$ has an $\Exp(1)$ distribution. Furthermore, $\fSv(\theta)=\theta^2$
and it follows from \eqref{e:XSttLLN} that
$\Svtt{\iEndtt} / n \dto \fSv(e^{-\iEndtt})=e^{-2\iEndtt}\sim B(\frac12,1)$
(a beta distribution with density $\frac12x^{-1/2}$ on $\oi$),
so in this case there is a non-degenerate limiting distribution of the size
of the epidemic.

Recall that this example was considered at the end of \refS{s:results}.
It is easily seen that for the two modifications considered there, with
some vertices of degree 1 or 4,  we have $\iEndtt\pto0$
and $\iEndtt\pto\infty$, respectively, and thus
$\Svtt{\iEndtt} / n \pto 1$ and $\Svtt{\iEndtt} / n \pto 0$.
\end{remark}

\subsection{Many initially infective half-edges: proof of \refT{t:mIg0}}
\label{ss:pfmIg0}

We now consider the case where there is initially a large
number of infective half-edges, \ie{} $\mI > 0$.

\medskip

\noindent (a)
It suffices to prove that $h$ defined in \eqref{e:fXIthetaOvertheta} has a unique root $\fXIroot \in (0,1)$,
since $h(1) = \mI > 0$ and $h(0) = - \aS p_1 - \mR - \rho \mX/\beta < 0$  by
\eqref{e:h0<0} and \ref{d:p1orrhopos}.
Calculating $h''(\theta) = -\aS \sum_{k=0}^\infty k(k-1)(k-2)\theta^{k - 3}
p_k \le 0$ shows that $h$ is concave on $(0,1)$.
This, together with the inequality $h(1)>0$,
implies uniqueness of $\fXIroot$.

\medskip

\noindent (b)
By \ref{d:sumsquaredi=On},
$\sum_{k=0}^\infty k^2 p_k < \infty$,
see \refR{r:2ndmoment}, and by \eqref{e:fXS}--\eqref{e:fXI},
the derivative of $\fXI$ is bounded on
$[0,1]$.  Hence, $\pI$
is Lipschitz continuous on $[\fXIroot, 1]$.  Consequently, both existence and
uniqueness of the solution $\lTP{t}$, $t \ge 0$, to \eqref{e:dlTPtmIg0}
follow from standard theory. Note that the constant function $\fXIroot$ is
another solution to the differential equation, so $\lTP{t}>\fXIroot$ for all
$t$ by uniqueness of solutions.
Thus $\lTP{t}$ is strictly decreasing and bounded below,
so the limit $\lim_{\ttoo}\lTP{t}$ exists, and must be a zero of $\pI$, \ie,
by part \ref{t:mIg0.a},
the limit equals $\fXIroot$.

In the proof below we will use a more explicit form of the solution.
Let $\fpT \= -\ln\fXIroot$, and define $\lttInv{}:[0,\fpT) \to [0,\infty)$ by
\begin{equation}\label{e:lttInv}
 \lttInv{\tau} \= \int_0^\tau \frac{d\sigma} {\beta \pI(e^{-\sigma}) }, \qquad 0\le \tau < \fpT.
\end{equation}
The integrand is strictly positive on $[0, \fpT)$ and so $\lttInv{}$ is strictly increasing.
Furthermore, $\pI(e^{-\fpT}) = 0$ and $\pI'(e^{-\sigma})$ is bounded
for $\sigma$ in a neighbourhood of $\fpT$.
Hence $\pI(e^{-\sigma}) = O(\fpT - \sigma)$ for $\sigma \in [0, \fpT]$.
It follows that $\lttInv{\tau} \upto \infty$ as $\tau \upto \fpT$.
The inverse $\ltt:[0,\infty) \to [0,\fpT)$ of $\lttInv{}$ is strictly increasing and continuously
differentiable, and satisfies $\ltt'(t) = \beta \pI(\exp(-\ltt(t))$ by the Inverse Function Theorem, and $\ltt(0) = 0$.
So  $\lTP{t} = \exp(-\ltt(t))$ solves~\eqref{e:dlTPtmIg0}.

\medskip

\noindent (c)
We first show that $\iEndtt \pto \fpT$.  
Let us take a sufficiently small $\eps > 0$ and define
\begin{equation}
  \label{e:delta}
\delta
\= \inf_{\tau \le \fpT - \eps} \fXI(e^{-\tau}) \wedge |\fXI(e^{-(\fpT + \eps)})|.
\end{equation}
By part \ref{t:mIg0.a}, $\delta > 0$.
Then \refL{l:LLN} (with $\tau_1 = \fpT + \eps$) 
shows that \whp
\begin{equation}\label{e:XIttLLNA}
\sup_{\tau \le \iEndtt \wedge (\fpT + \eps)} \bigabs{\XItt{\tau}/n - \fXI(e^{-\tau})} < \delta/2.
\end{equation}
We claim that, on that event, $\fpT - \eps < \iEndtt < \fpT + \eps$.
Indeed, if $\iEndtt \le \fpT - \eps$, then the left hand side of \eqref{e:XIttLLNA}
it is at least $\abs{\XItt{\iEndtt}/n - \fXI(e^{-\iEndtt})} = \fXI(e^{-\iEndtt}) \ge \delta$,
by definition of $\delta$ and the fact that $\XItt{\iEndtt} = 0$.  If $\iEndtt \ge \fpT + \eps$,  then the left hand side of \eqref{e:XIttLLNA}
is at least $\abs{\XItt{\fpT + \eps}/n - \fXI(e^{-(\fpT + \eps)})} \ge \abs{\fXI(e^{-(\fpT + \eps)})} \ge \delta$,
by definition of $\delta$ and the fact that $\XItt{\fpT + \eps} \ge 0$ and
$\fXI(e^{-(\fpT + \eps)}) < 0$.
Thus, \whp{}
\begin{equation}
  \label{jeppe}
\fpT - \eps < \iEndtt < \fpT + \eps.
\end{equation}

Our next task is to return to the original process, and, to this end, we need to study
the process $(A_\tau)_{\tau \ge 0}$
defined in
\eqref{e:AddFunc} and its inverse $\tau (t)$.  The integrand in \eqref{e:AddFunc} converges to $1/(\beta \pI(e^{-\sigma}))$ uniformly in probability on $[0,\fpT - \eps]$ by
\eqref{e:XttLLN}, \eqref{e:XIttLLN}, and the fact that $\fXI(e^{-\sigma}) \ge \delta > 0$ for $0 \le \sigma \le  \fpT - \eps$.
Therefore,
\begin{equation}\label{e:AddFuncConv}
A_\tau \pto \lttInv{\tau}
\end{equation}
uniformly over $0 \le \tau \le \fpT - \eps$, where $\lttInv{\tau}$ is as in~(\ref{e:lttInv}).

Let $t_1 \= \lttInv{\fpT - 2\eps}$.
(We assume $\eps<\fpT/2$.)
The uniform convergence \eqref{e:AddFuncConv} 
and strict monotonicity of $\lttInv{}$ imply that $A_{\fpT - \eps} > t_1$
\whpx.
On this event, $\tau(t) \le \tau(t_1)   < \fpT - \eps$ for $t \le t_1$,
and so \eqref{e:AddFuncConv} shows that
\begin{equation}\label{e:AddFuncConv1}
t-  \lttInv{\tau(t)}
=
 A_{\tau(t)} - \lttInv{\tau(t)} \pto 0,
\end{equation}
uniformly on $t \le t_1$.  Recall from the proof of (b) that $\hat{\tau}(t)$ is the inverse of the function $\lttInv{\tau}$; then $0 \le \ltt'(t) = \beta \pI(\exp(-\ltt(t))\le \beta$,
and so $\ltt$ is uniformly continuous.  Hence, \eqref{e:AddFuncConv1}  implies
\begin{equation}
  \label{abba}
  \sup_{t \le t_1}\bigabs{\ltt(t) - \tau(t)} \pto 0.
\end{equation}
Recall the definition $\xtau(t)\=\tau(t)\wedge\iEndtt$.
If $t \ge t_1$, then \whp{}, using \eqref{abba} and \eqref{jeppe},
\[
\fpT - 3 \eps =\ltt(t_1)-\eps< \xtau(t_1) \le \xtau(t) \le \iEndtt < \fpT + \eps,
\]
from which it follows that \whp
\begin{equation}
  \label{baab}
 \sup_{t \ge t_1} \bigabs{\ltt(t) - \xtau(t)} < 
3\eps.
\end{equation}
We conclude from \eqref{abba}--\eqref{baab}
that $\xtau(t) \pto \ltt(t)$, and hence that $\exp(-\xtau(t)) \pto \lTP{t} = \exp(-\ltt(t))$, uniformly over all $t \ge 0$.
It now follows that
\begin{align*}
\sup_{t \ge 0}\abs{\Sv{t}/n - \fSv(\lTP{t})} & = \sup_{t \ge 0}\abs{\Svtt{\xtau(t)}/n - \fSv(\lTP{t})} \\
& \le
\sup_{t \ge 0}\abs{\Svtt{\xtau(t)}/n - \fSv(e^{-\xtau(t)})}
+ \sup_{t \ge 0}\abs{\fSv(e^{-\xtau(t)}) - \fSv(\lTP{t})}
\end{align*}
converges to zero in probability, by \eqref{e:SvttLLN} and
the uniform continuity of $\fSv$ on $[0,1]$.  The
convergence statements in \eqref{e:convmIg0X}
follow similarly from
\refL{l:LLN} and uniform continuity of the functions  
$\fXS$, $\fXI$, $\fXR$. The convergence of $X_t/n$ also follows, as $X_t = \XS{t} + \XI{t} + \XR{t}$.

Since $\Rv{t}/n=1-\Sv{t}/n-\Iv{t}/n$, it remains to prove convergence for the fraction $I_t/n$ of infective vertices  in~\eqref{e:convmIg0SIR}.  We will work directly with the original process, using a compactness argument.
The number of infective vertices $I_t$ increases by 1 when a free
infective half-edge
is paired with a free suceptible half-edge, and decreases by 1 when an
infective vertex recovers. Therefore,
\begin{equation}\label{e:dIvt}
d\Iv{t} = \beta \XI{t} \parfrac{\XS{t}}{\X{t} - 1}dt - \rho \Iv{t}dt + dM_{\mathrm{I},t},
\end{equation}
where $(M_{\mathrm{I},t})_{t \ge 0}$ is a c\`adl\`ag martingale with
$M_{\mathrm{I},0} = 0$
and quadratic variation
\[
 [M_{\mathrm{I}}]_t \le \sum_{s \ge 0} (\Delta I_s)^2
\le 2n,
\]
since each vertex can only get infected or recover at most once.
As in the proof of \refL{l:LLN}, Doob's inequality then gives
$\sup_{t \ge 0} \abs{M_{\mathrm{I},t}} = \op(n)$.

Writing \eqref{e:dIvt} in integral form and dividing by $n$, we obtain
\begin{equation}\label{e:intIvt}
(\Iv{t} - \Iv{0} -M_{\mathrm{I},t})/n = \int_0^t \left(\parfrac{\beta \XI{s}\XS{s}}{n(\X{s} - 1)} - \frac{\rho \Iv{s}}{n} \right)ds,
\end{equation}
and the integrand is bounded in absolute value (by $2\beta\mu + \rho$ for $n$ large).  Hence, $(\Iv{t} - \Iv{0} -M_{\mathrm{I},t})/n$, $n \ge 0$,
is a uniformly Lipschitz family, and
it is also uniformly bounded on each finite interval $[0,t_1]$.
Thus, the Arzela--Ascoli theorem implies that it is tight in $C[0,t_1]$ for
any $t_1> 0$ \cite[Theorems A2.1 and  16.5]{Kallenberg},
and so also in $C[0,\infty)$.
This then implies that there exists a subsequence along which the process converges in distribution in $C[0,\infty) \subset D[0,\infty)$,
and the same holds for $(\Iv{t} - \Iv{0})/n$ in $D[0,\infty)$ since $\sup_{t \ge 0} \abs{M_{\mathrm{I},t}} = \op(n)$.  The convergence may
be assumed almost sure by the Skorokhod coupling lemma.

Hence, along the subsequence, $\Iv{t}/n$ \as{}
converges, uniformly on compact sets,
to a continuous limit $\lIv{t}$. Clearly, $\lIv{0} = \aI$.
Let us show that $\lIv{t}$ is deterministic. Since ${M_{\mathrm{I},t}} = \op(n)$,
\eqref{e:intIvt}, the dominated convergence theorem,
and the uniform convergence \eqref{e:convmIg0X} of $\X{t}/n$, $\XI{t}/n$ and $\XS{t}/n$ (which may also be assumed \as{}), together imply that

$k \ge 0$, where $T_{k} = (J_{k+1} - J_k)q(Y_{J_k})$ and $\tilde q(i) = q(i)/f(i)$, $i \in E$.  The Markov property
 of $(Y_{\tau})_{\tau \ge 0}$ implies the $T_k$ are
independent $\Exp(1)$ random variables that are also independent of the embedded jump
chain $(Y_{J_k})_{k \ge 0} = (\tilde Y_{\tilde J_k})_{k \ge 0}$.   The latter has transition
kernel $(q(i,j)/q(i))_{i,j \in E} = (\tilde q(i,j)/\tilde q(i))_{i,j \in E}$,
where $\tilde q(i,j) = q(i,j)/f(i)$, $i,j \in E$.
It follows that $\tilde Y_t$ is a Markov chain with transition rates $\tilde
q(i,j)$.
\end{proof}

\section{Summary of notation}\label{Anotation}

For ease of reference we summarise the main notation used to describe the epidemic.
A subscript of $\mathrm{S}$, $\mathrm{I}$ or $\mathrm{R}$ always refers to susceptible, infective or recovered, respectively.
The initial conditions are as follows.
\smallskip

\begin{center}
\begin{tabular}{p{5cm}p{9cm}}
$\nS$, $\nI$, $\nR$ 				&	 Number of vertices (of given type). \\
$\nSk$, $\nIk$, $\nRk$ 				&	 Number of degree $k \ge 0$ vertices. \\
$\aS$, $\aI$, $\aR$					&	 Limiting fractions of vertices. \\
$(p_k)_{0}^\infty$, $\lambda$ 		&	 Limiting degree distribution for a randomly chosen susceptible and its mean. \\
$\mX = \mS + \mI + \mR$				&	 Limiting mean degree (for any vertex). \\
\end{tabular}
\end{center}

\smallskip

The stochastic processes are denoted as follows.

\begin{center}
\begin{tabular}{p{5cm}p{9cm}}
$\Sv{t}$, $\Iv{t}$, $\Rv{t}$ 		&	 Number of vertices (of given type) at time $t\ge 0$. \\
$\Sv{t}(k)$, $\Iv{t}(k)$, $\Rv{t}(k)$ 		&	Number of degree $k \ge 0$ vertices. \\
$\XS{t}$, $\XI{t}$, $\XR{t}$ 		&	Number of free half-edges. \\
$\X{t} = \XS{t}+\XI{t}+\XR{t}$ 		&	Total number of half-edges. \\
\end{tabular}
\end{center}

\smallskip
Time-changed versions of the above processes are superscripted with a prime and use greek time indices.
In addition we have $\tau(t)$ and $A_\tau$, the time change and its inverse.
The limiting trajectories for these processes are denoted as follows.
\begin{center}
\begin{tabular}{p{5cm}p{9cm}}
$\ltt(t)$, $\lttInv{\tau}$										&	Time change and its inverse.\\
$\lTP{t} =  e^{-\hat\tau(t)}$ 								&	Parameterisation of time. \\
$\fSv(\lTP{t})$, $\lIv{t}$, $\lRv{t}$						&	Fraction of vertices (of given type) at time $t\ge 0$. \\
$\fXS(\lTP{t})$, $\fXI(\lTP{t})$, $\fXR(\lTP{t})$,
$\fXX(\lTP{t})$  		&	Number of free half-edges divided by $n$. \\
\mbox{$\pS(\lTP{t}) = \fXS(\lTP{t})/\fXX(\lTP{t})$},
& Susceptible proportion of free half-edges.
\\
$\pI(\lTP{t})= \fXI(\lTP{t})/\fXX(\lTP{t})$ & Infective	proportion of free half-edges.
\end{tabular}
\end{center}
\smallskip

The following quantities are also important in our analysis.
\begin{center}
\begin{tabular}{p{5cm}p{9cm}}
$\fXIroot = \lim_{t\to\infty}\lTP{t}$ 	& Root of $\fXI$ corresponding to the end of the epidemic. \\
$\iEndtt$ 								& Duration of the time-changed epidemic.\\
$\hat\tau_{\infty} = -\ln \fXIroot$ 	& Limiting duration of the time-changed epidemic, assuming a large outbreak.\\
$\calibS \in (\fSv(\fXIroot),\fSv(1))$	& Fraction of susceptibles used in calibration time $T_0$. \\
\end{tabular}
\end{center}

\bibliography{sir-aug2014}
\bibliographystyle{plain}

\end{document}